\documentclass[final,leqno,letterpaper]{etna}
\setbibdata{1}{xx}{46}{2017} 
\hypersetup{%
    pdftitle={A Jacobi-Type  Eigensolver for   Diagonally Dominant Symmetric  Matrices},
    pdfauthor={Luca Gemignani},
    pdfkeywords={Eigenvalue computation,  Jacobi method, diagonal dominance }
    }
\usepackage{graphicx} 
\usepackage{siunitx}
\usepackage{multirow}
\usepackage{enumitem}
\usepackage{lipsum}
\usepackage[english]{babel} 
\usepackage{blindtext}
\usepackage[caption=false]{subfig}
\usepackage{hyperref}
\usepackage{derivative}
\usepackage[ruled,noend,noline,slide]{algorithm2e}
\SetKwRepeat{Do}{do}{while}
\usepackage{algpseudocode}
\usepackage{csvsimple}
\usepackage{algcompatible}
\usepackage{amsfonts,mathtools}

\DeclareMathOperator{\off}{off}

\DeclareMathOperator{\sign}{sign}

\usepackage{listings}
\usepackage{subdepth} 
\newcommand{\B}[1]{\mbox{\boldmath $#1$}}
\title{A Jacobi-Type  Eigensolver for   Diagonally Dominant Symmetric  Matrices \thanks{The author is member of  INDAM/GNCS}}
\author{Luca Gemignani\footnotemark[2]}
\shorttitle{A Jacobi-Type  Iteration for   Diagonally Dominant Symmetric  Matrices} 
\shortauthor{L.~GEMIGNANI}
\begin{document}
\maketitle

\renewcommand{\thefootnote}{\fnsymbol{footnote}}

\footnotetext[2]{Dipartimento di Informatica, Universit\`a di Pisa, Largo B. Pontecorvo, 3  Pisa, Italy.}

\begin{abstract}
 This paper presents a Jacobi-type iteration for computing a  given specified eigenpair of a  symmetric  matrix.   For  a certain class of diagonally dominant matrices, the procedure is shown to converge at a linear rate depending on  how the matrix is significantly dominated.  The cost per iteration is  quadratic at most. 
Therefore,  the  proposed procedure  can compute an approximation of the desired eigenpair in quadratic time. 
\end{abstract}

\begin{keywords}
Eigenvalue computation,  Jacobi method, diagonal dominance 
\end{keywords}

\begin{AMS}
65F15, 65G05
\end{AMS}

\section{Introduction}
In this paper we are concerned with the numerical computation of a given specified 
eigenpair of a symmetric matrix  under  some diagonal dominance properties.  The problem is encountered in several different contexts.  

The Laplacian matrices of graphs are diagonally equipotent \cite{10.1016/j.automatica.2013.05.016}. Classes of generalized  diagonally dominant matrices play an important role in the theory of stability \cite{LIMEBEER01081982,572854}
 and have numerous applications in  economics, mathematical ecology, mechanics,  and other branches of science. 

Updating the eigendecomposition of a symmetric matrix under small or small-rank perturbations is an important  task in numerical linear algebra \cite{Golub,BN}. Let $A\in \mathbb R^{n\times n}$ be a symmetric matrix and $A=Q DQ^T$  its eigendecomposition, where $Q$ is orthogonal  and $D$ is diagonal.  Updating the eigendecomposition of $A+P$ amounts to compute 
the eigendecomposition of $B=D + Q^T P Q$. If $P$ is of small norm then $B$ is almost diagonal.  When $P$ is a rank-one matrix the perturbed eigenvalue problem is  associated with secular equation root-finding.   

The application of iterative root-finding schemes  to solve  the secular equation   provides a parallel  effective way to update the eigendecomposition of $D$ under  a rank-one perturbation. Many specialized root-finders have been designed in the literature (see \cite{Mel2} for a brief review and numerical comparisons). In particular, 
in \cite{Tilli1997} a continuation method was proposed to track the eigenvalue/zero paths simultaneously. At each step a problem with a small correction is solved.  However,  such procedures  may suffer from  the  classical weaknesses and limitations of algebraic root-finding methods in terms of both  robustness and conditning. Moreover,  they are limited to eigenvalue computation and have to be complemented with other different techniques to update the full set of eigenvectors. 

Traditional ways of  computing the eigenpairs of symmetric matrices using orthogonal transformations  include the Jacobi algorithm and the QR algorithm \cite{GOLUB200035}. Although the former  requires more arithmetic
computations,   it exhibits a large degree of  potential parallelism \cite{luk}. In addition, Jacobi-type  methods work very efficiently with almost diagonal matrices.  The  quadratic convergence of Jacobi methods applied to a  class of diagonally dominant matrices is analyzed in \cite{Matejas2000ER, Matejas2008ER}. Based on this analysis in  \cite{DRMAC2000191}  a way to accelerate the convergence of   the Jacobi algorithm is provided. Let $U$  be an  approximate eigenvector matrix of $A$, then applying the Jacobi algorithm on $B=U^TAU$  can speed up the reduction into diagonal form. Notice that   $B$  can likely be  a diagonally dominant matrix. 

In this work,  by elaborating on the results in \cite{Matejas2000ER, Matejas2008ER} we propose a  Jacobi-type method for computing  a given specified  eigenvalue and the corresponding eigenvector  of a diagonally dominant symmetric  matrix. If we are interested in a larger subset of all eigenpairs, then  our method is parallel in nature since all the approximations can be computed simultaneously and independently.  Therefore,  it provides a viable alternative to  simultaneous root-finding schemes  in the design of homotopy-based eigenvalue/eigenvector  algorithms, such as the one proposed  in \cite{Tilli1997}. 
For the class of  diagonally dominant  matrices  considered in \cite{Matejas2000ER, Matejas2008ER}  our proposed method exhibits a 
fast  linear convergence  which depends on  how the input matrix is significantly dominated.  The resulting  algorithm is numerically robust and accurate,  as it uses  orthogonal transformations only. The cost per iteration is quadratic at most. Therefore,  under conditions of reasonably fast convergence the  proposed procedure computes an approximation of the desired eigenvalue in quadratic time.
Numerical experiments  confirm the robustness and  efficiency of the proposed algorithm by showing that convergence can occur under weaker conditions  especially for the computation of well-separated eigenvalues of dense, symmetric matrices with diagonal dominance properties.

The paper is organized as follows. Section \ref{two} describes  a motivating example and introduces the notation used throughout the paper .  Section 
\ref{three}   presents  our  eigenvalue algorithm with  its convergence analysis.  Numerical experiments are illustrated in Section \ref{four}.  Finally, conclusions and future work are drawn in Section \ref{five}.

\section{ Motivating Example and Preliminaries}\label{two}
The Golub leukemia dataset \cite{Golub1999-zl} is a foundational microarray dataset used in bioinformatics for cancer class discovery and prediction, specifically distinguishing between Acute Lymphoblastic Leukemia (ALL) and Acute Myeloid Leukemia (AML). It is highly regarded as a benchmark for clustering algorithms in gene expression analysis.  In \cite{HIGHAM200725} this dataset is used  for testing spectral clustering techniques.  The leukemia dataset is stored  as a matrix $A\in \mathbb R^{7128\times 72}=(a_{i,j})$, where  $a_{i,j}$ measures  the activity of the $i$-th gene in the sample from the  $j$-th patient. The dataset consists of measurements on 72 leukemia patients, 47 "ALL"  and  25 "AML". The weight matrix $W = A^TA$   of size $n=72$ describes the similarities  among different patients.  The {\em Fiedler vector}  $\B v_2$ corresponds to the eigenvector associated with the second smallest eigenvalue of the {\em normalized Laplacian} $L=D^{-1/2}(D-W)D^{-1/2}$, where $D$ is a diagonal matrix with $d_i=\sum_{j=1}^{n} w_{i,j}$,  $1\leq i\leq 72$. The usual "spectral clustering"  approach is to divide the dataset  in two clusters based on the entries of the Fiedler vector \cite{HIGHAM200725}.  Since $L$ is symmetric the classical Jacobi eigenvalue algorithm can be employed  to calculate  the selected eigenpair $(\lambda_2, \B v_2)$. 
This algorithm applies a sequence of elementary plane rotations  in order to reduce the matrix $L$ into  diagonal form by  orthogonal similarity.  The cost is  generally $O(n^3)$. However, since we are interested in the second   eigenpair only,  one might consider  a simple modification of the  basic algorithm where all the rotations are restricted to act in the plane of coordinates  $(k, 2)$, $1\leq k\leq n$, $k\neq 2$.  If this change is successful, the modified  scheme would aim to reduce the computational cost  per iteration from $O(n^3)$ to $O(n^2)$   by highlighting an approximation of the second eigenvalue  $\lambda_2$ in position $(2,2)$.  In Figure  \ref{start} we illustrate the numerical results generated in MATLAB using this modification of the Jacobi algorithm. In particular, the plots  show the convergence of the absolute error $|L_{2,2}^{(j)}-\lambda_2|$  and of the Euclidean norm of the off-diagonal entries in the second row of  $L^{(j)}$ by using a logarithmic scale on the y-axis.  The errors are measured at the end of each  $j$-th iteration where $n-1$ plane rotations are applied.  It is  clearly seen that the errors exhibit a linear convergence behavior. 
\begin{figure}
\begin{minipage}[t]{.5\linewidth}
\vspace{0pt}
\centering
\includegraphics[width=2in]{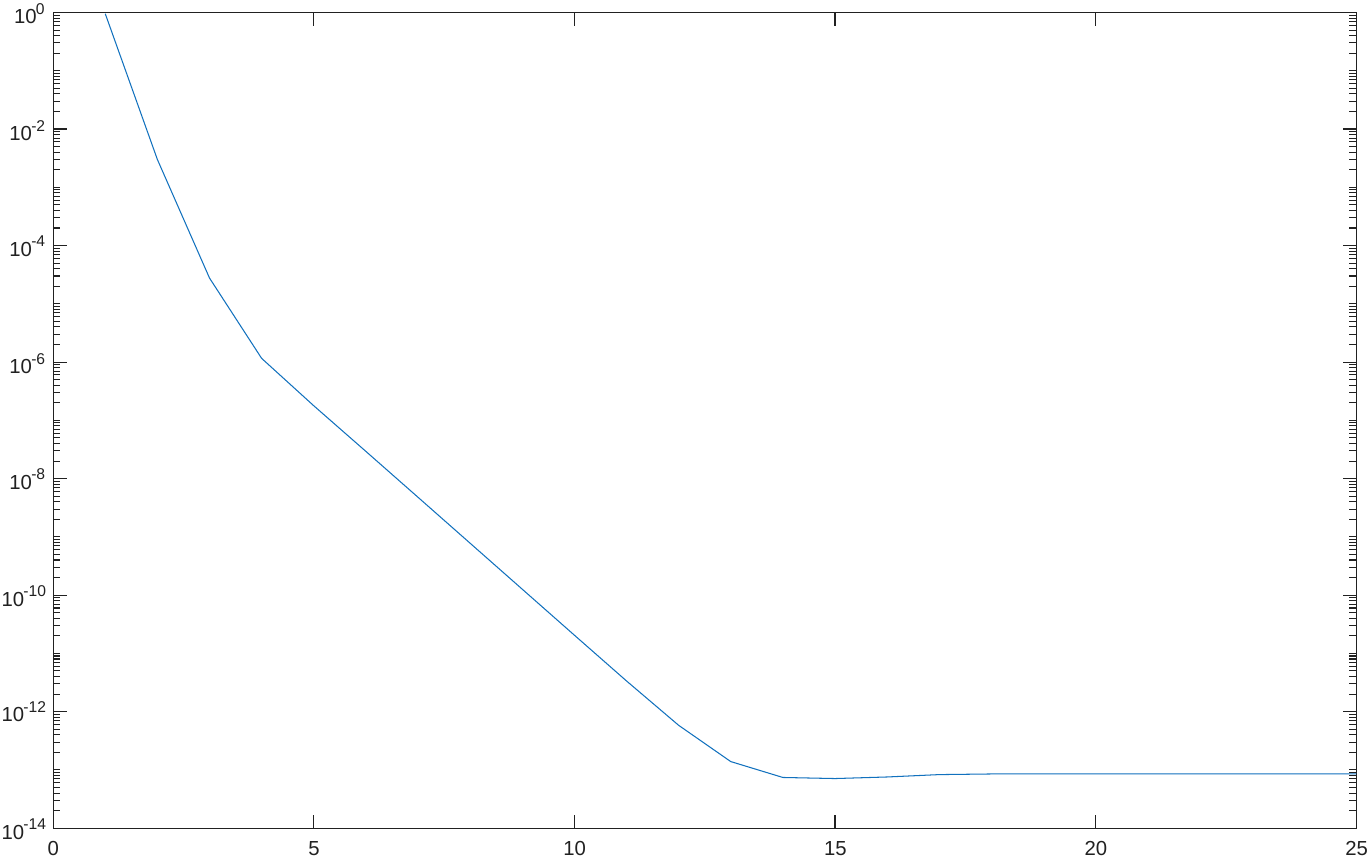}
\end{minipage}%
\begin{minipage}[t]{.5\linewidth}
\vspace{0pt}
\centering
\includegraphics[width=2in]{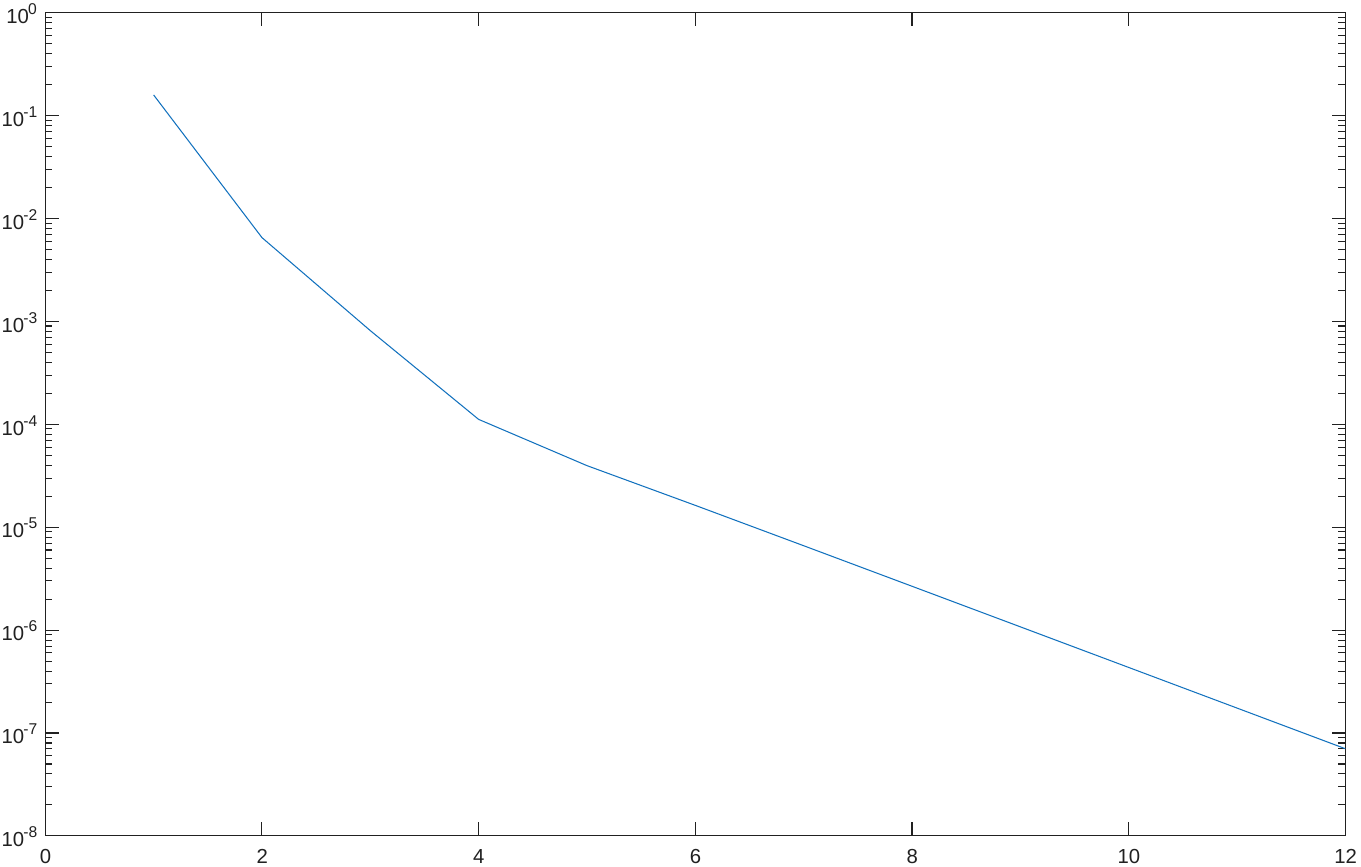}
\end{minipage}
\caption{Convergence history  of the quantities computed by our modification of the Jacobi algorithm applied to the normalized Laplacian of the leukemia dataset. The plot on the right shows the absolute error $|L_{2,2}^{(j)}-\lambda_2|$ whereas the plot on the left gives the Euclidean norm of the vector formed by the off-diagonal entries in the second row of  $L^{(j)}$ }\label{start}
\end{figure}
The primary motivation of this work is to explain the convergence of the modified eigenvalue algorithm.  

The customary Jacobi method  applied to   a symmetric  input matrix $A=(a_{i,j})\in \mathbb R^{n\times n}$
has the property of reducing the Frobenius norm of its  off-diagonal part. By $\Omega(A)$ we denote the off-diagonal part of $A$, i.e., $\Omega(A)=A-\diag(A)$.
Let us introduce the $\off$ operator given by 
\[
\off(A)=\parallel \Omega(A)\parallel_F=\sqrt{\parallel A\parallel_F^2 -\sum_{i=1}^n a_{i,i}^2} =\sqrt{
\sum_{i=1}^n\sum_{\substack{j=1\\ j\neq i}}^n a_{i,j}^2
}.
\]
Analogously, define 
\[
\off(A(k, \colon))=\sqrt{
\sum_{\substack{j=1\\ j\neq k}}^n a_{k,j}^2
}.
\]
Suppose to carry out  one step of the Jacobi algorithm $B\leftarrow Q A Q^T$, where $Q$ is a Givens rotation matrix that acts on the rows of indices $p$ and $q$, i.e., 
\[
Q=I_n + \left[ \B e_p, \B e_q \right ]\left[\begin{array}{cc} c-1&-s\\s & c-1
\end{array}\right] \left[ \B e_p, \B e_q \right ]^T
\]
so that 
\begin{equation}\label{ann}
\left[\begin{array}{cc} b_{p,p}&0\\0& b_{q,q}
\end{array}\right]=\left[\begin{array}{cc} c&-s\\s & c
\end{array}\right]\left[\begin{array}{cc} a_{p,p}& a_{p,q}\\a_{q,p} & a_{q,q}
\end{array}\right]\left[\begin{array}{cc} c&s\\-s & c
\end{array}\right].
\end{equation}
 From the invariance of the Frobenius norm under  the product by orthogonal matrices we deduce that 
 \[
 b_{p,p}^2 +  b_{q,q}^2 =a_{p,p}^2 +  a_{q,q}^2 + 2 a_{p,q}^2.
 \]
 Since $ \parallel A\parallel_F=\parallel B\parallel_F$ by using this relation  we find that 
 \begin{equation}\label{dec}
 \off(B)^2=\off(A)^2 -2 a_{p,q}^2.
 \end{equation}
 This property  is at the basis for  any convergence proof of the Jacobi eigenvalue algorithm.  The convergence rate  generally depends on the diagonal dominance properties of 
 the input  matrix $A$. 

Let us recall the notion of {\em scaled diagonally dominant} matrices from \cite{Barlow1990}.  Suppose that  $A=N+D\in \mathbb R^{n\times n}$ with  $D=\diag(A)$  and $D$  invertible. Under this assumption we can form the scaled matrix $H=|D|^{-1/2} A |D|^{-1/2}$.  Then,  $A$ is said a $\alpha$-scaled diagonally dominant matrix  w.r.t. the norm $\parallel \cdot \parallel$ if $\parallel \Omega(H) \parallel \leq \alpha$ for a  certain $\alpha$ with $0\leq \alpha<1$.   Observe that 
 \[
 H=|D|^{-1/2} A |D|^{-1/2}=|D|^{-1/2} N |D|^{-1/2} +\sign(D)
 \]
 and, therefore, 
 \[
 \parallel \Omega(H) \parallel =\parallel |D|^{-1/2} N |D|^{-1/2} \parallel.
 \]
If $A$ is symmetric diagonally dominant with positive diagonal entries, then 
\[
\parallel \Omega(H) \parallel_2 =\parallel D^{-1/2} N D^{-1/2} \parallel_2 <1
\]
since $D^{-1/2} N D^{-1/2}$ and $D^{-1} N$ are similar matrices.  Analogously, if $A$ is symmetric diagonally equipotent with positive diagonal entries then $\parallel \Omega(H) \parallel_2 \leq 1$. 

It can be shown that the class of scaled diagonally dominant matrices also includes  any symmetric positive definite matrix  which can be consistently ordered \cite{Barlow1990}. Other interesting properties which can be useful in our analysis are listed below. Specifically: 
\begin{description}
\item[Property 1.] The  relative distance between diagonal entries and eigenvalues can be estimated \cite{Hari}. 
Assume that $A$ is  a symmetric, $\alpha$-scaled diagonally dominant matrix  w.r.t. the  Euclidean norm   with distinct eigenvalues $\lambda_1< \lambda_2< \ldots < \lambda_n$ and its diagonal entries are ordered so that $a_{i,i}\leq a_{j,j}$ if $i<j$. Then, we obtain 
\begin{equation}\label{sep}
\frac{|a_{i,i}-\lambda_i|}{|a_{i,i}|}\leq\frac{4\cdot (\off(H(i,\colon)))^2}{\gamma}
\end{equation}
provided that $\alpha\leq  \gamma/(\gamma+3)$. Here  
$\gamma$ is the {\em minimum relative gap} in the spectrum of $A$ defined by 
\[
\gamma=\min_{1\leq j\leq n} \gamma_j, \quad \gamma_j=\min_{i\neq j} \frac{|\lambda_i-\lambda_j|}{|\lambda_i| + |\lambda_j|}.
\]
 Inequality \eqref{sep} can be extended to  symmetric matrices with multiple eigenvalues by considering the relative gap between disjoint sets of coalesced eigenvalues.
\item[Property 2.]  The orthogonal matrices  involved in the Jacobi process are also $\alpha$-scaled diagonally dominant.   Let $A$ be   a symmetric, $\alpha$-scaled diagonally dominant matrix  w.r.t. the  Frobenius norm.  Consider  a Givens plane rotation matrix determined  as in  \eqref{ann}  to annihilate the entries $a_{p,q}=a_{q,p}$.   There exist  suitable  positive constants $C_1<1$ and $C_2<1$ such that if $\alpha\leq C_1 \min\{\displaystyle \frac{1}{n}, \gamma\}$,  then  it follows (see Lemma 5 in  \cite{Matejas2008ER}):
\begin{equation}\label{small}
|\tan(\phi)|=\left|\frac{s}{c}\right|=\left|\frac{\sin(\phi)}{\cos(\phi)}\right| \leq 
\frac{C_2 |h_{p,q}|}{\gamma} \leq \frac{C_2 \alpha}{\gamma}.
\end{equation}
\end{description}

Property 1 is useful to guarantee that the possible convergence of $\off(H(i,\colon))$ towards zero implies the convergence of the $i$-th diagonal entry of $A$ to the desired eigenvalue.  Property 2 relates the relative magnitude of the off-diagonal entries of Givens rotation matrices with the ratio $\alpha/\gamma$
which is supposed to decrease under the Jacobi iteration.  This ratio will play a fundamental role in the convergence analysis of our proposed eigenvalue algorithm. 

\section{The Algorithm}\label{three}
Let $A=(a_{i,j})\in \mathbb R^{n\times n}$ be a  symmetric  matrix  with  
eigenvalues $\lambda_1\leq \lambda_\leq \leq \ldots \leq  \lambda_n$.  We  introduce the following {\bf Algorithm 1}   for approximating  the $m-$th eigenvalue $\lambda_m$.  The algorithm makes use of the  MATLAB function {\tt schur} which determines the Schur decomposition of a $2\times 2$ matrix. If $B\in \mathbb R^{2\times 2}$ is symmetric then $[U,T]={\tt schur}(B)$ returns an orthogonal matrix $U$ and a diagonal matrix $T$ such that $B=UTU^T$.
  \begin{algorithm}[tb]\caption{Given  a symmetric $A=(a_{i,j})\in \mathbb R^{n\times n}$, a tolerance threshold  $tol\in \mathbb R^+$ and  the integer $m\in [1,n]$ this procedure attempts to return  an  approximation $\widehat \lambda_m$ of the $m-$th eigenvalue $\lambda_m$  of $A$.}
    \begin{algorithmic}
    \STATE $[ignore, p]={\tt sort}(\diag(A))$; $A=A(p,p)$; 
     \WHILE {\tt until converged}
         \FOR  {$k=1\colon m-1$};
         \IF {$|A(m,k)|\geq tol$}
         \STATE $[U,T]={\tt schur}(A([k,m], [k,m])$; 
         \IF {$T(1,1)>T(2,2)$}
          \STATE $U=U(\colon, 2:-1:1)$; 
         \ENDIF
         \STATE $A([k, m], :)= U^T A([k, m], :)$;  $A(:, [k, m])= A(:, [k, m])U$;
         \ENDIF
          \ENDFOR
          \FOR  {$k=n :-1: m+1$};
          \IF {$|A(m,k)|\geq tol$}
         \STATE $[U,T]={\tt schur}(A([m, k], [m,k])$; 
         \IF {$T(1,1)>T(2,2)$}
          \STATE $U=U(\colon, 2:-1:1)$; 
         \ENDIF
         \STATE $A([m, k], :)= U^T A([m, k], :)$;  $A(:, [m, k])= A(:, [m, k])U$;
         \ENDIF
          \ENDFOR
 \ENDWHILE
 \STATE $\widehat \lambda_m=A(m,m)$
    \end{algorithmic}
\end{algorithm}
\begin{remark}
The first instruction is a preprocessing step. Reordering the entries of $A$ in accordance with the sorting of its diagonal entries can facilitate the convergence 
of the selected diagonal entry towards the corrected eigenvalue. 
\end{remark}
\begin{remark}
 In the classical Jacobi procedure, the  Givens rotation $U$  acting in the plane $(p,q)$  is determined  to annihilate   the entry $a_{p,q}$  by setting 
 \begin{equation}\label{jacformula}
 U=\left[\begin{array}{cc} \cos(\phi)&\sin(\phi)\\-\sin(\phi) & \cos(\phi)
\end{array}\right], \quad \tan(2 \phi)= \frac{2 a_{p,q}}{a_{q,q}-a_{p,p}}, \quad \phi\in [-\pi/4, \pi/4].
 \end{equation}
 For this choice of $U$ it can easily be  seen that the ordering of the diagonal entries in position $p$ and $q$ is invariant under the transformation.  Specifically, if we compute 
 \[
A([p, q], :)= U^T A([p, q], :);  \quad A(:, [p, q])= A(:, [p,q])U;
 \]
 then we find 
 \[
 a_{p,p} \leftarrow  a_{p,p}-\frac{\sin(\phi)}{\cos(\phi)} a_{p,q}, \quad a_{q,q} \leftarrow  a_{q,q}+\frac{\sin(\phi)}{\cos(\phi)} a_{p,q}.
 \]
 If $a_{q,q}>a_{p,p}$  holds before the transformation then from \eqref{jacformula} $\sign(\tan(\phi))=\sign(a_{p,q})$ and therefore the ordering of the diagonal entries is maintained under the transformation.  We conclude similarly  if otherwise the condition $a_{q,q}<a_{p,p}$  is satisfied. 
 Thus, the conditional statements in  {\bf Algorithm 1} ensure that the  ordering produced by Schur decompositions is in accordance with the one generated by means of the  classical procedure.  We have preferred the current formulation to  stress its significance w.r.t the ordering of the diagonal entries of $A$.
\end{remark}

{\bf Algorithm 1} requires $O(n^2)$ arithmetic operations per iteration at most. 
In the next subsection we investigate  its  convergence properties. 

\subsection{Convergence Analysis}
 The proposed algorithm generates a sequence of matrices  
 $\{A^{(k)}\}_{k\in \mathbb N}$  with $A^{(0)}=A\in \mathbb R^{n\times n}$ and $A^{(k+1)}\leftarrow Q^{(k+1)}A^{(k)} {Q^{(k+1)}}^T$ for  $k\geq 0$. The orthogonal matrix $Q^{(k+1)}=I_n + \left[ \B e_p, \B e_q \right ]U^T \left[ \B e_p, \B e_q \right ]^T$  is used at step $k+1$  of the algorithm.  Each iteration requires $n-1$ orthogonal transformations ( (possibly equal to identity if no action is necessary). The subsequence $\{A^{( k(n-1))}\}_{k\in \mathbb N}$ is formed from the matrices generated at the end of each cycle in the {\tt while} command.

 Next result proves the convergence of the proposed annihilation scheme for a positive tolerance threshold.
 \begin{theorem}\label{genconv}
 Let  $\{A^{(k)}\}_{k\in \mathbb N}$  be the sequence of matrices generated by {\bf Algorithm 1} applied to the symmetric matrix  $A^{(0)}=A$  with  a given fixed tolerance threshold $tol>0$.  Then,  we have $\lim_{k\rightarrow +\infty}\off(A^{(k)}(m, \colon)=\gamma$ with $\gamma< \sqrt{n} \cdot tol$
 \end{theorem}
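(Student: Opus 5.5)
The plan is to use the monotonicity identity \eqref{dec} as a Lyapunov function. First I would check that every transformation in \textbf{Algorithm 1} is a genuine Jacobi annihilation in the plane $(k,m)$ or $(m,k)$. The Schur step diagonalizes the $2\times 2$ principal block, and the optional column swap only reorders the eigenvalues. So the updated block is diagonal and the entry in position $(k,m)$ becomes zero. Hence \eqref{dec} applies at every step where a rotation is actually performed:
\[
\off(A^{(j+1)})^2=\off(A^{(j)})^2-2\bigl(a^{(j)}_{k,m}\bigr)^2, \qquad \bigl|a^{(j)}_{k,m}\bigr|\geq tol .
\]
When the test $|A(m,k)|\geq tol$ fails, the step is the identity and $\off$ is unchanged. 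Therefore $\{\off(A^{(j)})\}$ is nonincreasing and bounded below by zero. Moreover, each performed rotation decreases $\off(A^{(j)})^2$ by at least $2\,tol^2$. It follows that only finitely many rotations, at most $\off(A)^2/(2\,tol^2)$ of them, can ever be performed.

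So there is an index $J$ after which every step is the identity, and the sequence $A^{(j)}$ is eventually constant, say equal to $A^{(J)}$. The limit of $\off(A^{(k)}(m,\colon))$ therefore exists and equals $\off(A^{(J)}(m,\colon))$; this is the quantity called $\gamma$ in the statement.

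It remains to bound it. Take the first full sweep of the \texttt{while} loop that starts after index $J$. During that sweep no rotation is applied, so every test $|A^{(J)}(m,k)|\geq tol$, for $k\neq m$, failed. Hence $|a^{(J)}_{m,k}|<tol$ for all $n-1$ indices $k\neq m$. Summing the squares gives
\[
\off(A^{(J)}(m,\colon))^2=\sum_{k\neq m}\bigl(a^{(J)}_{m,k}\bigr)^2<(n-1)\,tol^2,
\]
so $\gamma<\sqrt{n-1}\,tol\leq\sqrt{n}\,tol$.

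The main subtlety is the bookkeeping rather than any estimate. The stopping rule ``until converged'' must be read as ``a full sweep with no action'', or the iteration must be considered infinite, so that a quiescent sweep is guaranteed to occur. I would also verify that the column swap in the Schur step does not destroy the annihilation property. It does not: the swap only permutes the diagonal of $T$, and the transformed block stays diagonal.
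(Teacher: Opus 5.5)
Your proof is correct and uses the same mechanism as the paper: the identity \eqref{dec}, together with the fact that every rotation actually performed removes at least $2\,tol^2$ from $\off(A^{(j)})^2$. Your counting formulation (finitely many rotations, hence an eventually constant sequence and a sweep in which no rotation is applied) is slightly tighter than the paper's infimum/contradiction argument. That argument tacitly assumes the step right after index $M$ acts on the offending entry $a_{m,j}$, and it never separately shows that the limit exists.
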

 \begin{proof}
 Notice that  $\off(A^{(k)})\geq 0$ is bounded   and from   \eqref{dec} it  is monotonically nonincreasing.  Therefore,  it admits a limit which is the infimum of the sequence. 
If this infimum is zero  then the same limit holds for $\off(A^{(k)}(m, \colon)$.
If, otherwise, the limit is nonzero,  say $\ell>0$,   we have that $\forall \epsilon>0$
 $\exists M\in \mathbb N$ such that $\forall k\geq M$ it holds $\ell^2 \leq \off(A^{(k)})^2\leq \ell^2 +\epsilon$. Suppose that  $\epsilon\leq tol^2$ and that there exists $j\neq m$ such that $|a_{m,j}^{(M)}|\geq tol$. Then,  from \eqref{dec} we obtain 
\[
\off(A^{(M+1)})^2\leq \ell^2 +\epsilon -2 \  tol^2\leq \ell^2-\epsilon
\]
which contradicts the infimum definition.  Therefore, we conclude that  $|a_{m,j}^{(k)}|< tol$
 for all $k\geq M$ and $j\neq m$.
 \end{proof}
 
 We illustrate pictorially in Figure \ref{fnew} the convergence  of our proposed eigenvalue algorithm applied to a  symmetric random matrix of order $n=129$ for the approximation of  its  eigenvalue $\lambda_m$ with $m=(n+1)/2$.  
\begin{figure}
\begin{minipage}[t]{.5\linewidth}
\vspace{0pt}
\centering
\includegraphics[width=2in]{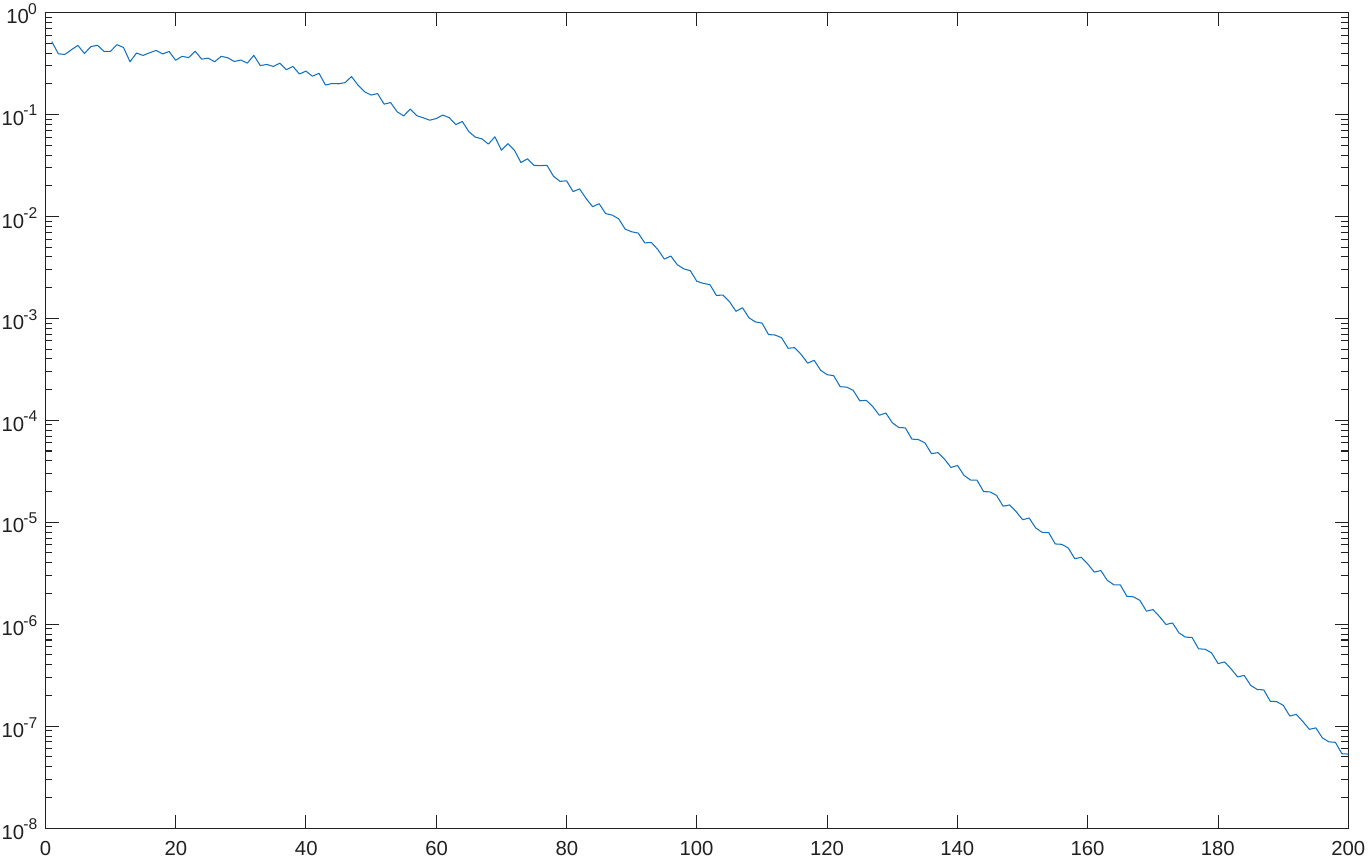}
\end{minipage}%
\begin{minipage}[t]{.5\linewidth}
\vspace{0pt}
\centering
\includegraphics[width=2in]{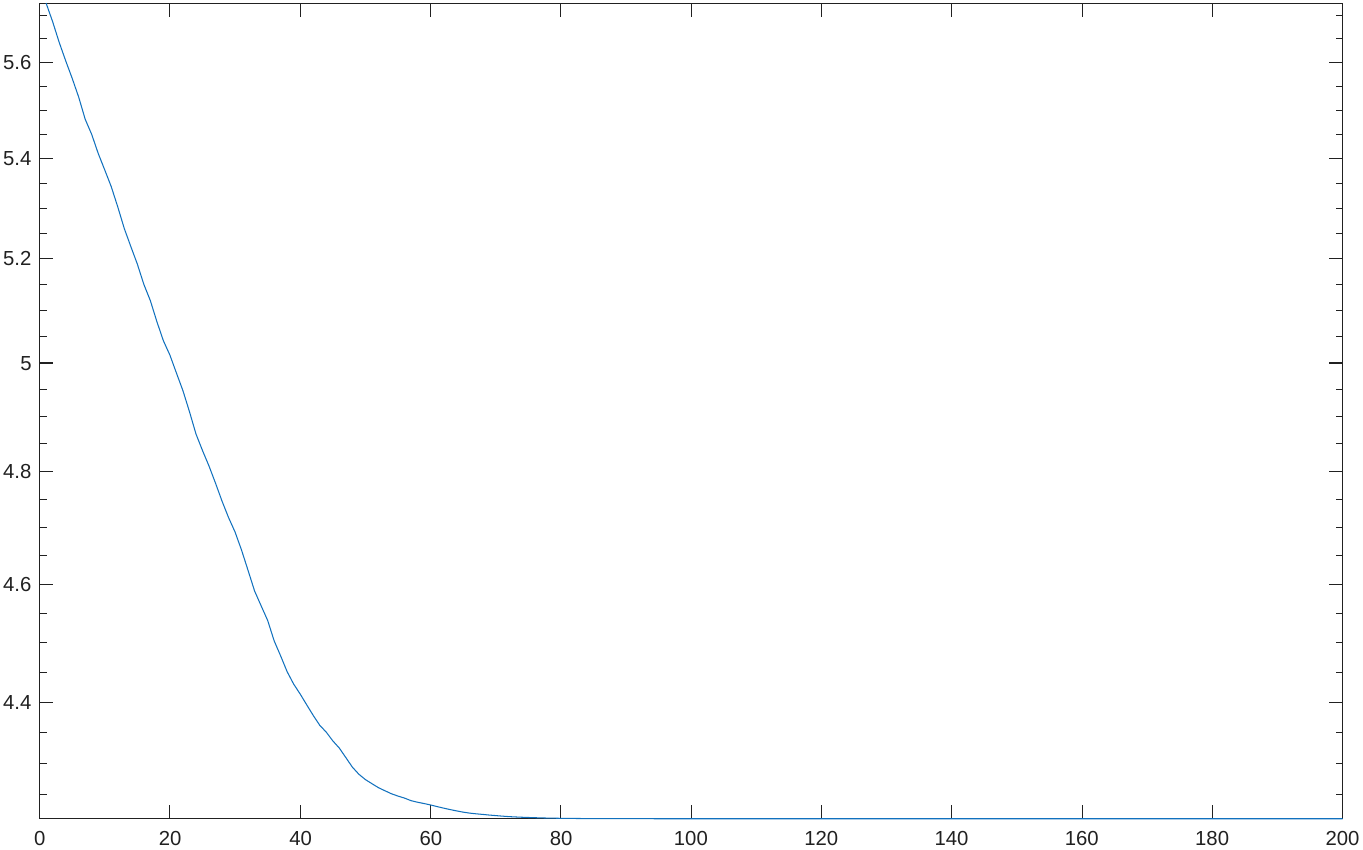}
\end{minipage}
\caption{Convergence history for  Procedure {\textsc{EIG\_DDSM}}. The figure on the left shows  the convergence  of $\off(A^{(k)}(m, \colon))$  whereas the figure on the right hand side gives the monotonic decreasing of $\off(A^{(k)})$.}\label{fnew}
\end{figure}
However,  in general,  the convergence of the iterative algorithm  
can be  excruciatingly slow. Moreover, the result stated in  Theorem \ref{genconv} does not  generally guarantee that $a_{m,m}^{(k)}$ is an approximation of the  sought $m-$th eigenvalue of $A$. The exploitation of the diagonal dominance property makes crucial to obtain reliable bounds as well as to prove the  correctness of our proposed algorithm.


  Recall that $\{A^{(k)}\}_{0\leq k \leq M}$ with $M=\ell(n-1)$   is the  finite sequence  of matrices generated by {\bf Algorithm 1} in the first  $\ell$ cycles of the {\tt while} command.  We would like  to estimate the reduction of  $\off(A^{(k)}(m, \colon))$ for this  sequence of matrices when  $tol=0$.  Let  $H^{(k)}=|D_k|^{-1/2} A^{(k)} |D_k|^{-1/2}$,  $D_k=\diag(A^{(k)})$,  denote the  scaled iterate  whenever it can be formed.
 For the sake of notational  simplicity,  we also set  
 \[
 \alpha_k= \off(H^{(k)})=\parallel \Omega(H^{(k)})\parallel_F. 
 \]
The following result  adjusts  the general convergence analysis  for the  Jacobi method  performed in \cite{Matejas2008ER} to our algorithm.   Some  constants  might be improved, but
since it is clear that these improved constants are not tight  anyway,  we prefer to present a shorter
proof.

\begin{theorem}\label{convth}
  Let $A=(a_{i,j})\in \mathbb R^{n\times n}$, $n\geq 3$,  be  a symmetric matrix such that $a_{1,1}\leq a_{2,2}\leq \ldots \leq a_{n,n}$, $a_{i,i}\neq 0$, $1\leq i\leq n$,  and, moreover, $\alpha_0\leq \displaystyle\frac{1}{11} \min\{\frac{1}{n},\gamma\}$. For the matrices generated by {\bf Algorithm 1}  applied to $A=A^{(0)}$ with $tol=0$ in the first $n$ cycles of the while statement  the following properties hold:
  \begin{enumerate}
   \item  $\alpha_k\leq 1.001  \ \alpha_0, \quad  0\leq k\leq n(n-1)$; 
  \item $\off(H^{((n-1)\ell)}(m, \colon)) \leq  \displaystyle\left(\frac{2.8\cdot 1.001 \ \alpha_0}{\gamma}\right)^\ell \alpha_0,  \quad 1\leq \ell \leq n$.
  \end{enumerate}
\end{theorem}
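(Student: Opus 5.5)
We prove both items together by induction on the step index $k$, following the scheme of \cite{Matejas2008ER}. The basic tool is the effect of one rotation in a plane $(p,q)$ with $m\in\{p,q\}$ on the \emph{scaled} off-diagonal entries. Write $t_k=\tan(\phi)$ for the angle used at step $k$. Each step annihilates $a_{p,q}$. It changes only rows and columns $p,q$, and the off-diagonal entries in these rows are replaced by
\[
c\,a_{p,j}-s\,a_{q,j}, \qquad s\,a_{p,j}+c\,a_{q,j}.
\]
The diagonal entries move by $\mp t_k a_{p,q}$. Since $|a_{p,q}|\le \alpha\sqrt{|a_{p,p}a_{q,q}|}$ and $|t_k|\le C_2\alpha/\gamma$ by \eqref{small}, the relative change of $a_{p,p},a_{q,q}$ is $O(\alpha^2/\gamma)$. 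Hence
\[
\frac{\sqrt{|a_{p,p}^{\rm new}|}}{\sqrt{|a_{p,p}|}}\in \left[1-\delta,\,1+\delta\right],\qquad \delta=O(\alpha^2/\gamma)\le O(\alpha/11).
\]
The remark after Algorithm 1 guarantees that the ordering of the diagonal, and hence the applicability of Property 2, is preserved. Moreover, $\sqrt{|a_{q,q}/a_{p,p}|}$ is controlled by the relative-gap assumption. Consequently each scaled entry $h_{p,j}$ is mapped to $h_{p,j}+O(|t_k|)\,|h_{q,j}|$, up to a multiplicative factor $1+O(\delta)$.

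\textbf{Item 1.} By \eqref{dec}, $\off(A^{(k)})$ does not increase. The scaled quantity $\alpha_k$ can grow only through the rescaling factors $|D_k|^{-1/2}$, so
\[
\alpha_{k+1}^2\le (1+O(\delta_k))\,\alpha_k^2 .
\]
Here $\delta_k\lesssim t_k\,|h^{(k)}_{p,q}|\lesssim |h^{(k)}_{p,q}|^2/\gamma$. Summing over the $n(n-1)$ steps, we must bound $\sum_k |h^{(k)}_{p,q}|^2$. Since each step removes the squared entry $2a_{p,q}^2$ from $\off(A)^2$, the induction hypothesis $\alpha_j\le 1.001\,\alpha_0$ for $j\le k$ gives $\sum_k |h^{(k)}_{p,q}|^2\lesssim \alpha_0^2$ up to a harmless factor. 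Hence the accumulated product is at most
\[
\exp\!\left(O(\alpha_0^2/\gamma)\right)\le \exp\!\left(O(\alpha_0/11)\right)\le \exp\!\left(O(1/(121n))\right),
\]
which with the stated constants stays below $1.001$. Thus item 1 is a Gronwall-type argument, and the condition $\alpha_0\le\frac1{11}\min\{1/n,\gamma\}$ is used exactly to make this exponent small.

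\textbf{Item 2.} Track the row vector $r^{(k)}=H^{(k)}(m,:)$ without its diagonal entry during one cycle. When the cycle treats $k'=1,\dots,m-1$ and then $k'=n,\dots,m+1$, entry $h_{m,k'}$ is set to zero. Each \emph{already annihilated} entry $h_{m,j}$ is refilled only by $s\cdot h_{k',j}$, whose size is at most $(C_2\alpha/\gamma)\,|h_{k',j}|$. At the end of the cycle, every entry of row $m$ is therefore a sum of such fill-in terms, and each term carries the factor $|t|\le C_2|h_{m,k'}|/\gamma$, with $|h_{m,k'}|$ evaluated at the current step. By Cauchy--Schwarz,
\[
\off\bigl(H^{((n-1)(\ell))}(m,:)\bigr)\le \frac{C_2(1+O(\delta))}{\gamma}\Bigl(\sum_{k'}|h_{m,k'}|^2\Bigr)^{1/2}\cdot \max\|\Omega(H)\|_F .
\]
The first factor is essentially $\off\bigl(H^{((n-1)(\ell-1))}(m,:)\bigr)$, up to growth that is controlled as in item 1. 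The second factor is at most $1.001\,\alpha_0$ by item 1. This yields the one-cycle contraction factor $2.8\cdot1.001\,\alpha_0/\gamma$, where the constant $2.8$ absorbs $C_2$ and the scaling corrections, and iterating over $\ell$ cycles gives item 2.

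\textbf{Main obstacle.} The hardest step is the bookkeeping in item 2. Within one cycle, the entry $h_{m,k'}$ that determines the angle has itself already been perturbed by earlier rotations of the same cycle. Bounding $\bigl(\sum_{k'}|h_{m,k'}^{\rm current}|^2\bigr)^{1/2}$ by the row norm at the start of the cycle, times $1+O(\alpha/\gamma)$, is what I would attempt first, via a nested induction inside the cycle. This has to be done while keeping the explicit constants ($11$, $1.001$, $2.8$) consistent.
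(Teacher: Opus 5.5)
Your item~2 skeleton is the right mechanism: annihilated entries of row $m$ are refilled by a tangent times $h_{k',j}$, then Cauchy--Schwarz over pairs, exactly as in the paper's first-order Theorem~\ref{foa} and \eqref{dym}. Item~1, however, rests on a false per-step inequality. Even with the diagonal frozen, the map induced on $H$ is $|D_{k+1}|^{-1/2}Q|D_k|^{1/2}$, which is not orthogonal when $|a_{p,p}|\neq|a_{q,q}|$. For a positive diagonal, $h'_{p,j}\approx c\,h_{p,j}-s\sqrt{a_{q,q}/a_{p,p}}\,h_{q,j}$ and $h'_{q,j}\approx s\sqrt{a_{p,p}/a_{q,q}}\,h_{p,j}+c\,h_{q,j}$. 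Using $\sin 2\phi=2\cos(2\phi)\,a_{p,q}/(a_{q,q}-a_{p,p})$ one finds
\[
\alpha_{k+1}^2=\alpha_k^2-2h_{p,q}^2-4\cos(2\phi)\,h_{p,q}\sum_{j\neq p,q}h_{p,j}h_{q,j}+r_k .
\]
Here $r_k$ contains the diagonal-rescaling effect you track (and the $s^2$ terms). Under the standing hypotheses it is bounded by a small multiple of $h_{p,q}^2$, so the exact decrease $-2h_{p,q}^2$ swallows it.

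The cross term is what you miss. It can be as large as $|h_{p,q}|\,\alpha_k^2$, which exceeds your $\delta_k\alpha_k^2\approx h_{p,q}^2\alpha_k^2/\gamma$ by the factor $\gamma/|h_{p,q}|$ (at least $10$, and unbounded as $h_{p,q}\to 0$). So no inequality $\alpha_{k+1}^2\le(1+O(\delta_k))\alpha_k^2$ can hold. A correct analysis must absorb this term into $-2h_{p,q}^2$, for instance via $4|h_{p,q}|X\le\theta h_{p,q}^2+4X^2/\theta$ with $X=|\sum_{j\neq p,q}h_{p,j}h_{q,j}|\le\alpha_k^2/4$. That leaves a genuine growth factor $1+O(\alpha_k^2)$ per step, which is the $1+0.00126/n^2$ of Lemma~3 in \cite{Matejas2008ER}. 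This is where $\alpha_0\le 1/(11n)$ is really needed, and why the statement stops at $n(n-1)$ steps. Your exponent $O(\alpha_0^2/\gamma)$, independent of the number of steps, is a symptom of the missing term. Your telescoping step also fails: \eqref{dec} controls $\sum_k (a^{(k)}_{p,q})^2$, not $\sum_k (h^{(k)}_{p,q})^2$, because $a_{p,q}^2=h_{p,q}^2|a_{p,p}a_{q,q}|$ and the diagonal may span many orders of magnitude.

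In item~2 the refill factor in scaled coordinates is $|s|\sqrt{|a_{k',k'}/a_{m,m}|}$, not $|s|$. Contrary to your preliminary claim, the ratio $|a_{k',k'}/a_{m,m}|$ is \emph{not} controlled by the gap hypothesis. Only the product is: $|t|\sqrt{|a_{k',k'}/a_{m,m}|}\le|h_{m,k'}|/\mathrm{rel}(a_{m,m},a_{k',k'})$, a scaled form of Lemma~\ref{new}. After that you still need lower bounds on the current diagonal relative gaps in terms of $\gamma$. Add the within-cycle induction you leave open and the unspecified $C_1,C_2$ in \eqref{small}, and nothing in the proposal yields the constants $1.001$ and $2.8$. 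Saying that $2.8$ ``absorbs'' $C_2$ does not prove the stated bound.

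The paper avoids all of this with one observation you are missing. A cycle of Algorithm~1 produces the same matrix as the first $n-1$ Jacobi steps in the reversed column-wise ordering, applied to the permuted matrix with index $m$ moved to the last position. Lemma~3 of \cite{Matejas2008ER} then gives item~1 directly: $\alpha_k\le\sqrt{(1+0.00126/n^2)^{n(n-1)}}\,\alpha_0\le 1.001\,\alpha_0$. Hence $\alpha_k\le\frac{1}{10}\min\{\frac1n,\gamma\}$ throughout, and Theorem~6 there gives the one-cycle contraction $\off(H^{((n-1)\ell)}(m,\colon))\le\frac{2.8\,\alpha_{(n-1)(\ell-1)}}{\gamma}\off(H^{((n-1)(\ell-1))}(m,\colon))$. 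This is iterated exactly as in your last step.
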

 \begin{proof}
From Lemma 3 in   \cite{Matejas2008ER} it follows that 
\[
\alpha_{k}\leq \sqrt{\left( 1+\frac{0.00126}{n^2}\right)^{n(n-1)}}\alpha_{0} \quad \quad  0\leq k\leq n(n-1)
\]
which implies 
\[
\alpha_{k}\leq \sqrt{e^{0.00126}} \ \alpha_{0}\leq 1.001  \ \alpha_{0}.  
\]
Now  recall that   the Jacobi method can be applied under several permutation equivalent orderings. In particular,  one  cycle of  {\bf Algorithm 1}  generates the same matrix as the one obtained after the first $n-1$ Jacobi steps in the reversed column-wise ordering applied to a permuted matrix where the  diagonal entry in position $m$ is moved to the last position. 
Observe that 
\[
\alpha_k \leq 1.001  \ \alpha_{0}\leq \frac{1.001}{11} \min \{\frac{1}{n},\gamma\} \leq\frac{1}{10}\min \{\frac{1}{n},\gamma\}.
\]
From this  by virtue of Theorem 6 in \cite{Matejas2008ER},  we deduce that 
\[
 \off(H^{((n-1)\ell)}(m, \colon)) \leq  \frac{2.8 \alpha_{(n-1)(\ell-1)}}{\gamma}\off(H^{((n-1)(\ell-1))}(m, \colon)),  \quad 1\leq \ell \leq n.
\]
Thus,  we  conclude that 
\[
 \off(H^{((n-1)\ell)}(m, \colon)) \leq  \left(\frac{2.8\cdot 1.001 \ \alpha_0}{\gamma}\right)^\ell \alpha_0,  \quad 1\leq \ell \leq n.
\]
 \end{proof}
 
Under  the hypotheses of Theorem \ref{convth},  $\alpha_k\leq \gamma/(\gamma+3)$ and therefore
 from \eqref{sep} it follows that $a_{m,m}^{(k)}$ approaches the $m-$th eigenvalue $\lambda_m$ of $A$. 
While Theorem \ref{convth}  predicts  under quite restrictive assumptions a linear convergence of the iterates with a convergence rate less than or equal to $\varsigma$, 
 \[
 \varsigma=\frac{2.8\cdot 1.001 \ \alpha_0}{\gamma} \leq  \frac{2.8\cdot 1.001 }{11}\leq 0.26, 
 \]
the actual
convergence of the iterates  can be faster and take place under more favourable conditions.
The following example illustrates the dynamic of convergence of Jacobi process on a small matrix. Experiments with larger matrices will be discussed in the next section. 

\begin{example}\label{exx1}
  Let $A\in \mathbb R^{11\times 11}$ be the matrix given by 
  \[
  a_{i,j}=\left\{\begin{array}{lll} i \ {\rm if} \ i=j; \\
  \frac{1}{121}  \ {\rm if} \ (i,j \neq 6) \land  (i\neq j); \\[5pt] 
  \frac{1}{100} \ {\rm if} \ (i\neq j=6) \lor (6=i\neq j).
  \end{array}\right.
  \]
Numerically we find that  $\gamma=0.048$, $\gamma_6=0.076$, $\alpha_0=0.023$ and $\alpha_0/\gamma\simeq 0.48$. The condition $\alpha_0\leq  C\displaystyle\min \{\frac{1}{n},\gamma\}$ is  satisfied with a larger constant $C\simeq 1/2$ than $1/11$.  However, our proposed algorithm exhibits a  fast linear convergence.  In Figure  \ref{fig1} we show  the plot of $\off(H^{(10(\ell-1)}(6, \colon))$ using a base $10$ logarithmic scale for the y-axis together with a  table of its first $6$ values. By using the  function {\tt polyfit} from MATLAB, the estimated rate of convergence is  $\widehat \varsigma= \num{5.48e-3}$.  
\begin{figure}
\begin{minipage}[t]{.55\linewidth}
\vspace{0pt}
\centering
\includegraphics[width=2in]{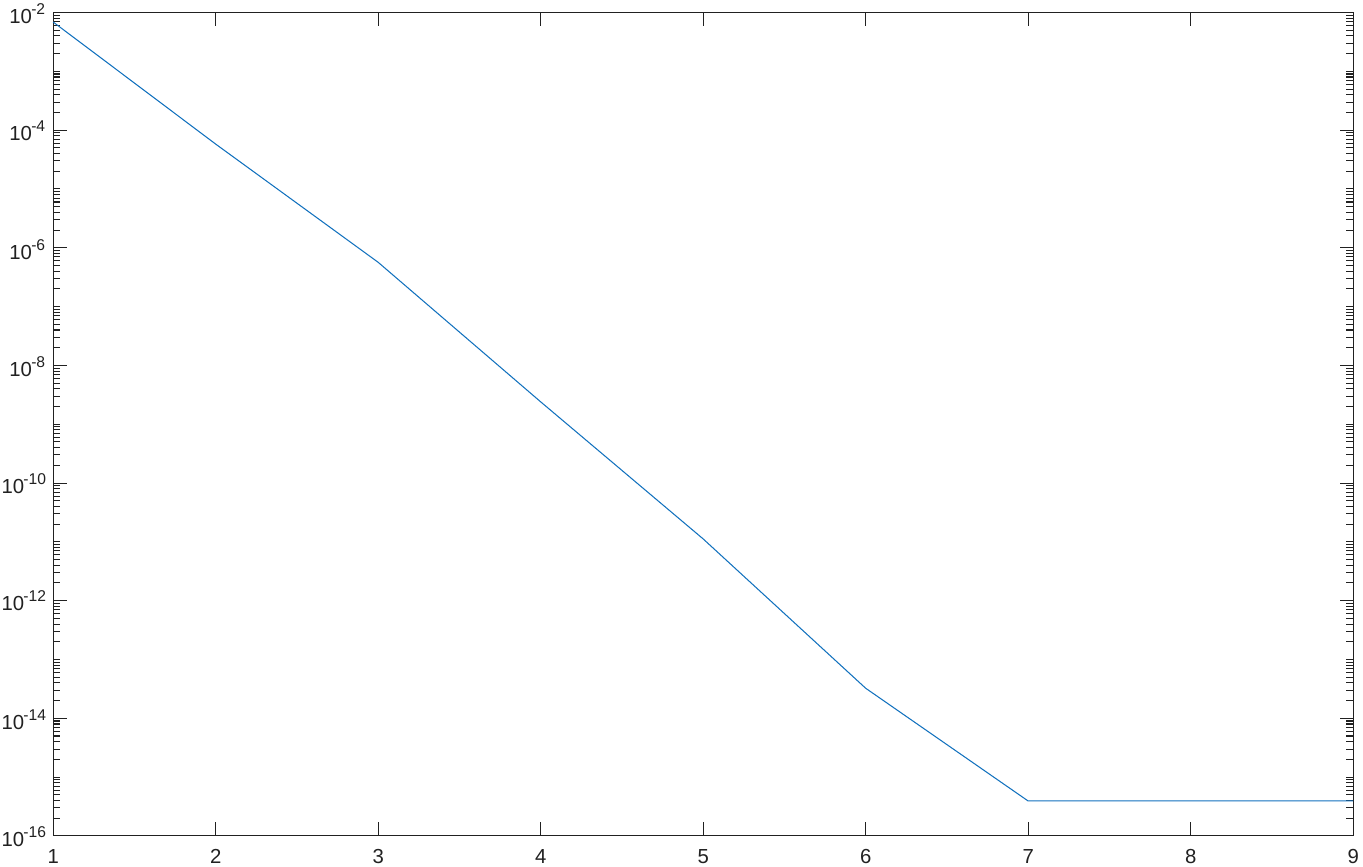}
\end{minipage}%
\begin{minipage}[t]{.35\linewidth}
\vspace{0pt}
\centering
\begin{tabular}{| c | c |}
        \hline
        $\ell$ &  $\off(H^{(10(\ell-1)}(6, \colon))$\\
        \hline
        1 & \num{6.89e-03} \\
        2 & \num{5.81e-05}\\
        3 &  \num{5.67e-07}\\
         4 & \num{2.41e-09}\\
          5 & \num{1.12e-11} \\
           6 & \num{3.25e-14} \\
        \hline
      \end{tabular}
\end{minipage}
\caption{Reduction of $\off(H^{(10(\ell-1)}(6, \colon))$  in the first steps of  {\bf Algorithm 1} applied to the matrix $A$ of Example \ref{exx1} with $m=6$. }\label{fig1}
\end{figure}
\end{example}
For the leukemia dataset we find $\gamma_2=\num{3.45e-03}$ and $\alpha_0\simeq 1$ and, therefore, the condition $\alpha_0\leq  C\displaystyle\min \{\frac{1}{n},\gamma\}$ is not fulfilled for any constant $C<1$.  
In order to   understand the results in Figure \ref{start} we note that  verifiable  conditions and error bounds can be obtained without spectral information by means of a first order error analysis.  
\begin{lemma}[\cite{Matejas2000ER}]\label{new}
Consider  a Givens plane rotation matrix determined  as in  \eqref{jacformula}  to annihilate the entries $a_{p,q}=a_{q,p}$ of $A=\left[\begin{array}{cc} a_{p,p}& a_{p,q}\\a_{q,p} & a_{q,q}
\end{array}\right]$ with $a_{p,p}\neq a_{q,q}$, $a_{p,p}, a_{q,q}\neq 0$. Then 
\[
|\tan(\phi)|\leq \frac{1}{2} \frac{|h_{p,q}|}{{\tt rel}(a_{p,p}, a_{q,q})}, \quad 
{\tt rel}(a_{p,p}, a_{q,q}) =\frac{|a_{p,p}-a_{q,q}|}{|a_{p,p}| + |a_{q,q}|}.
\]
\end{lemma}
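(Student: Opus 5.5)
We prove Lemma \ref{new} directly from \eqref{jacformula} by elementary trigonometry; no earlier result is needed beyond the definitions. We write $h_{p,q}=a_{p,q}/\sqrt{|a_{p,p}a_{q,q}|}$ for the scaled off-diagonal entry, as in the definition of $H$. We set $t=\tan(\phi)$, $\theta=2\phi\in[-\pi/2,\pi/2]$ and $\tau=\tan(2\phi)=2a_{p,q}/(a_{q,q}-a_{p,p})$.

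The first step is to bound $|t|$ by $|\tau|$. Since $|\phi|\le\pi/4$, the double-angle formula gives
\[
\tau=\frac{2t}{1-t^2}, \qquad |t|\le 1.
\]
Hence
\[
|\tau|=\frac{2|t|}{1-t^2}\ge 2|t|,
\]
that is, $|t|\le|\tau|/2$. Equivalently, $|t|$ is the smaller root $|\tau|/(1+\sqrt{1+\tau^2})$ of the quadratic, which is at most $|\tau|/2$. Using the formula for $\tau$, this yields
\[
|t|\le\frac{|a_{p,q}|}{|a_{q,q}-a_{p,p}|}.
\]

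The second step rewrites this bound in scaled form. Writing $|a_{p,q}|=|h_{p,q}|\sqrt{|a_{p,p}|\,|a_{q,q}|}$ gives
\[
|t|\le |h_{p,q}|\,\frac{\sqrt{|a_{p,p}||a_{q,q}|}}{|a_{p,p}-a_{q,q}|}.
\]
The arithmetic–geometric mean inequality gives $\sqrt{|a_{p,p}||a_{q,q}|}\le \tfrac12(|a_{p,p}|+|a_{q,q}|)$. Substituting,
\[
|t|\le \frac12\,|h_{p,q}|\,\frac{|a_{p,p}|+|a_{q,q}|}{|a_{p,p}-a_{q,q}|}=\frac12\,\frac{|h_{p,q}|}{{\tt rel}(a_{p,p},a_{q,q})},
\]
which is exactly the claimed bound.

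The only delicate point is the first step. The inequality $|t|\le|\tau|/2$ relies on the choice of branch $\phi\in[-\pi/4,\pi/4]$, and the hypothesis $a_{p,p}\ne a_{q,q}$ is what makes $\tau$ finite and keeps $|\phi|<\pi/4$. So I will state that branch choice explicitly. The hypothesis $a_{p,p},a_{q,q}\neq0$ ensures that $h_{p,q}$ is defined. Everything else is routine. The bound is in fact slightly loose: one could keep the factor $1/(1+\sqrt{1+\tau^2})$ instead of $1/2$ to obtain a sharper estimate, but this is not needed.
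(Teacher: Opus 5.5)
Your proof is correct and is essentially the paper's argument. The paper likewise uses $|\tan(\phi)|\le\frac{1}{2}|\tan(2\phi)|=|a_{p,q}|/|a_{p,p}-a_{q,q}|$, divides numerator and denominator by $\sqrt{|a_{p,p}a_{q,q}|}$, and finishes with the arithmetic--geometric mean inequality; your justification of the first inequality via the branch $|\phi|<\pi/4$ just makes explicit a step the paper leaves implicit.
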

\begin{proof}
 From   \eqref{jacformula} we have 
 \begin{equation}\label{bbq}
 |\tan(\phi)|\leq \frac{1}{2}|\tan( 2\phi)| \leq \frac{ 
 \displaystyle\frac{|a_{p,q}|}{\sqrt{|a_{p,p} a_{q,q}|}}}{\displaystyle\frac{|a_{p,p}-a_{q,q}|}{\sqrt{|a_{p,p} a_{q,q}|}}}.
 \end{equation}
 Hence, the thesis follows from the  the inequality of arithmetic and geometric means which implies 
 \[
 \displaystyle\frac{|a_{p,p}-a_{q,q}|}{\sqrt{|a_{p,p} a_{q,q}|}} \geq 
 2 {\tt rel}(a_{p,p}, a_{q,q}).
 \]
\end{proof}

Next result gives a first order analysis to describe the reduction of $\off(H^{((k)}(m, \colon))$  in {\bf Algorithm 1} during the execution of a cycle of the {\tt while} command.  Without loss of generality,  we  may assume $m=n$. 

\begin{theorem}\label{foa}
 Let  $A=A^{(0)}=(a_{i,j}^{(0)})\in \mathbb R^{n\times n}$, $n\geq 3$,  be  a symmetric matrix such that $0<a_{1,1}^{(0)}<a_{2,2}^{(0)}< \ldots < a_{n,n}^{(0)}$.  For the matrices generated by  {\bf Algorithm 1} applied to $A=A^{(0)}$  with $tol=0$ and $m=n$ define:
 \vskip-0.4cm
 \begin{align*}
 \widehat  \gamma &= \min_{1\leq p\leq n-1} \widehat \gamma_p, \quad \widehat \gamma_p=\displaystyle|\frac{a_{n,n}^{(0)}}{a_{p,p}^{(0)}}-1|;  \\
 \epsilon_k &=  \off(H^{(k)}(1:n-1, n)), \quad  \widehat  \alpha_k= \off(H^{(k)}(1\colon n-1, 1\colon n-1)).
 \end{align*}
 \vskip-0.1cm
 \noindent
 Then, in first order error analysis   neglecting  the $O(\epsilon_0^2)$ and $O(\epsilon_0 \widehat \alpha_0^2)$ terms, we find:
 \begin{enumerate}
     \item $\widehat \alpha_{n-1}\overset{\cdot}{=}\widehat \alpha_{0}$;
     \item $\epsilon_{n-1} \overset{\cdot}{\leq}\displaystyle\frac{\widehat \alpha_0}{\sqrt{2}\widehat\gamma} \epsilon_0 $.
 \end{enumerate}
 \end{theorem}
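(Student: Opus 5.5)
We track one cycle with $m=n$ and $tol=0$. Since $m=n$, the first loop is empty, and the cycle applies $n-1$ rotations in the planes $(n,k)$ for $k=n-1,n-2,\ldots,1$. Write $A^{(j)}$ for the matrix after the $j$-th rotation, which acts in the plane $(n,k_j)$ with $k_j=n-j$. The rotation is the classical Jacobi rotation \eqref{jacformula}, with the ordering fixed by the conditional statements. By Lemma \ref{new},
\[
t_j:=|\tan\phi_j|\leq \frac{1}{2}\frac{|h^{(j-1)}_{k_j,n}|}{{\tt rel}(a^{(j-1)}_{k_j,k_j},a^{(j-1)}_{n,n})}.
\]
To first order the diagonal entries change only by $O(\epsilon_0^2)$, since $a_{pp}\leftarrow a_{pp}\mp \tan(\phi)a_{pq}$ and $\tan\phi=O(\epsilon_0)$. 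We may therefore replace every diagonal entry by its initial value. Then
\[
{\tt rel}(a_{k,k},a_{n,n})=\frac{a_{nn}-a_{kk}}{a_{nn}+a_{kk}},
\qquad
\frac{1}{2\,{\tt rel}}=\frac{1}{2}\,\frac{a_{nn}/a_{kk}+1}{a_{nn}/a_{kk}-1}.
\]
We will need to compare this with $1/\widehat\gamma$. A cleaner route is to use \eqref{bbq} directly, which gives $t_j\lesssim |a_{kn}|/(a_{nn}-a_{kk})$, and to work in terms of scaled entries. Because the diagonal scaling is first-order invariant, $H^{(j)}$ is obtained from $H^{(j-1)}$ by an approximate rotation in the same plane.

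\textbf{Step 1 (item 1).} A rotation in the plane $(k,n)$ mixes row $k$ and row $n$. For $i\neq k,n$, the scaled entries become
\[
h_{ik}\leftarrow c\,h_{ik}- s\,\rho\, h_{in},\qquad h_{in}\leftarrow \ldots,
\]
where $\rho$ is a ratio of square roots of diagonal entries. Thus the block $H(1\colon n-1,1\colon n-1)$ changes by terms $s\,h_{in}=O(\epsilon_0\cdot\epsilon_0)$ or $(1-c)h_{ik}=O(\epsilon_0^2\widehat\alpha_0)$. All of these are neglected, so $\widehat\alpha_{n-1}\overset{\cdot}{=}\widehat\alpha_0$.

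\textbf{Step 2 (item 2).} After step $j$ the entry $h_{k_j,n}$ is exactly zero. It is later perturbed only by rotations in planes $(n,k')$ with $k'<k_j$, which give
\[
h_{k_j,n}\leftarrow c' h_{k_j,n}+ s'\rho' h_{k_j,k'}.
\]
Hence, to first order,
\[
h^{(n-1)}_{k,n}\overset{\cdot}{=}\sum_{k'<k}\pm t_{k'}\,\rho_{k,k'}\,h^{(0)}_{k,k'},
\]
where each $t_{k'}$ may be computed with the initial data, giving $t_{k'}\overset{\cdot}{\leq}|h^{(0)}_{k',n}|\sqrt{a_{k'k'}a_{nn}}/(a_{nn}-a_{k'k'})$. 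The diagonal ratio $\rho$ coming from the rescaling combines with this factor, and the product is bounded by $1/\widehat\gamma_{k'}$ via $\sqrt{a_{k'k'}}\sqrt{a_{nn}}/(a_{nn}-a_{k'k'})\cdot\sqrt{a_{k'k'}/a_{nn}}=1/\widehat\gamma_{k'}$. So
\[
|h^{(n-1)}_{k,n}|\overset{\cdot}{\leq}\frac{1}{\widehat\gamma}\sum_{k'<k}|h^{(0)}_{k',n}|\,|h^{(0)}_{k,k'}|.
\]

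\textbf{Step 3.} Summing squares over $k$ and applying Cauchy--Schwarz,
\[
\epsilon_{n-1}^2\overset{\cdot}{\leq}\frac{1}{\widehat\gamma^2}\sum_k\Big(\sum_{k'<k}|h_{k,k'}|^2\Big)\Big(\sum_{k'}|h_{k',n}|^2\Big)\leq \frac{1}{\widehat\gamma^2}\cdot\frac{\widehat\alpha_0^2}{2}\,\epsilon_0^2 .
\]
The factor $1/2$ holds because only the strictly lower triangle of the symmetric block enters, and that triangle carries half of $\widehat\alpha_0^2$. This gives item 2.

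The main obstacle is the bookkeeping of the diagonal-scaling factor $\rho$, that is, checking that its combination with the bound on $t_{k'}$ yields exactly $1/\widehat\gamma_{k'}$ rather than a relative gap measured with ${\tt rel}$. This depends on the ordering $a_{kk}<a_{nn}$ and on positivity; I would verify it first with explicit $2\times2$ formulas.
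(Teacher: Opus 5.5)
Your argument is the paper's own: the same first-order refill formula for the annihilated entries of column $n$, and the same scaling bookkeeping from \eqref{bbq}, namely $\sqrt{a_{k'k'}a_{nn}}/(a_{nn}-a_{k'k'})\cdot\sqrt{a_{k'k'}/a_{nn}}=1/\widehat\gamma_{k'}$. It also uses the same Cauchy--Schwarz step over one strict triangle of the symmetric block, and your Step 1 is more explicit than the paper's.

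One factual correction: you have the loops reversed. With $m=n$ it is the second loop (over $k=n,n-1,\ldots,m+1$) that is empty. The first loop performs the rotations in the planes $(k,n)$ for $k=1,2,\ldots,n-1$, in increasing order. So the zeroed entry $h_{k,n}$ is refilled by the later rotations with $k'>k$, and the sum runs over the strict upper triangle, exactly as in \eqref{dym}.

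Since $h_{k,k'}=h_{k',k}$, the argument works for any pivot order. The factor $\widehat\alpha_0^2/2$ and item 2 are therefore unaffected.
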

\begin{proof}
    First,  notice that from the trigonometric identity $\cos^2(\phi) +\sin^2(\phi) =1$  under the assumption that $\tan(\phi)=O(\epsilon_0)$ it follows that $\cos(\phi)\overset{\cdot}{=}1$. Therefore, without restriction one  may consider diagonally scaled versions of Givens rotation matrices.  It is easily found that 
    \[
    a_{j,n}^{(j-1)}=a_{j,n}^{(0)} + \sum_{k=1}^{j-1}\tan(\phi^{(k)}) a_{j,k}^{(0)}, \quad 1\leq j\leq n-1.
    \]
    From this, by virtue of Lemma \ref{new}  the proof proceeds  by induction on the iteration number $j$, $1\leq j\leq n-1$.  Specifically,  it can be shown   that 
    \[
    a_{p, p}^{(j)} \overset{\cdot}{=}  a_{p, p}^{(0)}, \quad 1\leq p\leq n-1, \quad 1\leq j\leq n-1, 
    \]
    and 
    \[
    |h_{p,q}^{(j)}|\overset{\cdot}{=}|h_{p,q}^{(0)}|, \quad 1\leq p,q\leq n-1, \quad 1\leq j\leq n-1.
    \]
   Moreover, from \eqref{bbq} one deduces that  
   \[
   |\displaystyle\frac{\tan(\phi^{(k)}) a_{j,k}^{(0)}}{\sqrt{a_{j,j}^{(0)}a_{n,n}^{(0)}}}| \overset{\cdot}\leq 
   \displaystyle\frac{|h_{k,n}^{(0)}| |h_{j,k}^{(0)}|}{\widehat \gamma_k}, \quad 1\leq k\leq j-1, 
   \]
and, hence, 
    \[
        |h_{j,n}^{(j-1)}|\overset{\cdot}{\leq }  |h_{j,n}^{(0)}| + \sum_{k=1}^{j-1} \displaystyle\frac{1}{\widehat \gamma_k}|h_{k,n}^{(0)}| |h_{j,k}^{(0)}|, \quad 1\leq j\leq n-1.
    \]
    Since  we have 
    \[
    a_{j,n}^{(n-1)}\overset{\cdot}{=}\sum_{k=j+1}^{n-1}\tan(\phi^{(k)})a_{j,k}^{(0)}
    \]
    then, we obtain that 
    \begin{equation}\label{dym}
     |h_{j,n}^{(n-1)}|  \overset{\cdot}{\leq}
    \sum_{k=j+1}^{n-1} \displaystyle\frac{1}{\widehat \gamma_k}|h_{k,n}^{(0)}| |h_{j,k}^{(0)}|, \quad 1\leq j\leq n-1.
    \end{equation}
    Finally, by using the Cauchy-Schwarz inequality for the standard 2-norm, this implies that 
    \[
    \epsilon_{n-1} \overset{\cdot}{\leq}\frac{\widehat\alpha_0}{  \sqrt{2} \widehat \gamma} \epsilon_0. 
    \]
\end{proof}

This theorem and in particular equation \eqref{dym} describes the dynamics of entry generation in the $m-$th column (row).
For the leukemia dataset, after the first two iterations it is found that  $\epsilon_0 \widehat \alpha_0^2\simeq\num{1.0e-07}$,  with $\widehat  \alpha_0= \off(H^{(2(n-1))}(3\colon n, 3\colon n))$ and $\epsilon_0 =  \off(H^{(2(n-1))}(:, 2))$, and 
$\displaystyle\frac{\widehat \alpha_0}{\sqrt{2}\widehat \gamma} \simeq 0.8$. Then, from  \eqref{dym}   we deduce the  linear convergence to zero of $\off(H^{(k)}(3:n, 2))$ and, hence, of $h^{(k)}_{1,2}$.

 \section{Numerical Results}\label{four}
 We have implemented  {\bf Algorithm 1} in MATLAB and then  we have performed numerical experiments  to confirm the  effectiveness of our proposed eigenvalue algorithm.   For the sake of visualization,   the tolerance is set to 0 and the iteration is stopped if $\off(A(m, \colon))\leq\parallel A\parallel_F \sqrt{\epsilon}$, where $\epsilon$ is the machine precision and $n$ is the size of the input matrix 
 $A$.  Our implementation also returns as output the parameters 
\[
\gamma_m=\min_{j\neq m} \frac{|\lambda_m-\lambda_j|}{|\lambda_m| + |\lambda_j|},  
\]
and 
\[
\alpha_0=\off(|\diag(A)|^{-1/2} A|\diag(A)|^{-1/2}).
\]
We  have conducted  numerical experiments in three different scenarios to validate the proposed approach. 

 \begin{enumerate}

\item The first set of experiments is aimed to further  illustrate the  application of our proposed method for spectral graph partitioning. The wine-quality dataset  (see https://archive.ics.uci.edu/dataset/186/wine+quality) is another 
 classical example used for testing classification algorithms.  The task is  to predict wine quality based on the physicochemical features. The dataset includes 
 4898 samples  of different wines with 11  corresponding attributes.  Hence, the clustering problem consists of $n=4898$ points $\B x_i$ in $\mathbb R^{11}$.  Following the algorithm  proposed in \cite{NIPS2001_801272ee}, the matrix $W=(a_{i,j})\in \mathbb R^{n\times n}$ has entries 
 \[
 w_{i,j}={\tt exp}(-\parallel \B x_i-\B x_j\parallel_2/(2\sigma^2)), \quad i\neq j; \quad w_{i,i}=0.
 \]
 Let $A=L=D^{-1/2}(D-W)D^{-1/2}$ be the normalized Laplacian with $D=\diag\left[d_1, \ldots, d_n\right]$, $d_i=\sum_{j=1}^n w_{i,j}$.
 In Figure \ref{fnew1} we show  the plots of  $err_j=|L^{(n-1)(j-1)}(m,m) -\lambda_m|$ and $\off(L^{((n-1)(j-1))}(m, \colon))$, $j\geq 1$,  for  $m\in \{2,3\}$ and $\sigma=10$. We have $\gamma_2=\num{4.5e-2}$, $\gamma_3=\num{9.4e-3}$ and $\alpha_0\simeq 1$. After two iterations with $m=2$ we find $\min_{3\leq k\leq n} \widehat \gamma_k=\num{1.1e-1}$ and  $\off(H^{(2(n-1))}(3\colon n, 3\colon n))=\num{4.7e-2}$
 After three iterations with $m=3$ we obtain $\min_{4\leq k\leq n} \widehat \gamma_k=\num{3.5e-2}$ and   $\off(H^{(3(n-1))}(4\colon n, 4\colon n))=\num{3.0e-2}$.
\begin{figure}
\begin{minipage}[t]{.55\linewidth}
\vspace{0pt}
\centering
\includegraphics[width=2in]{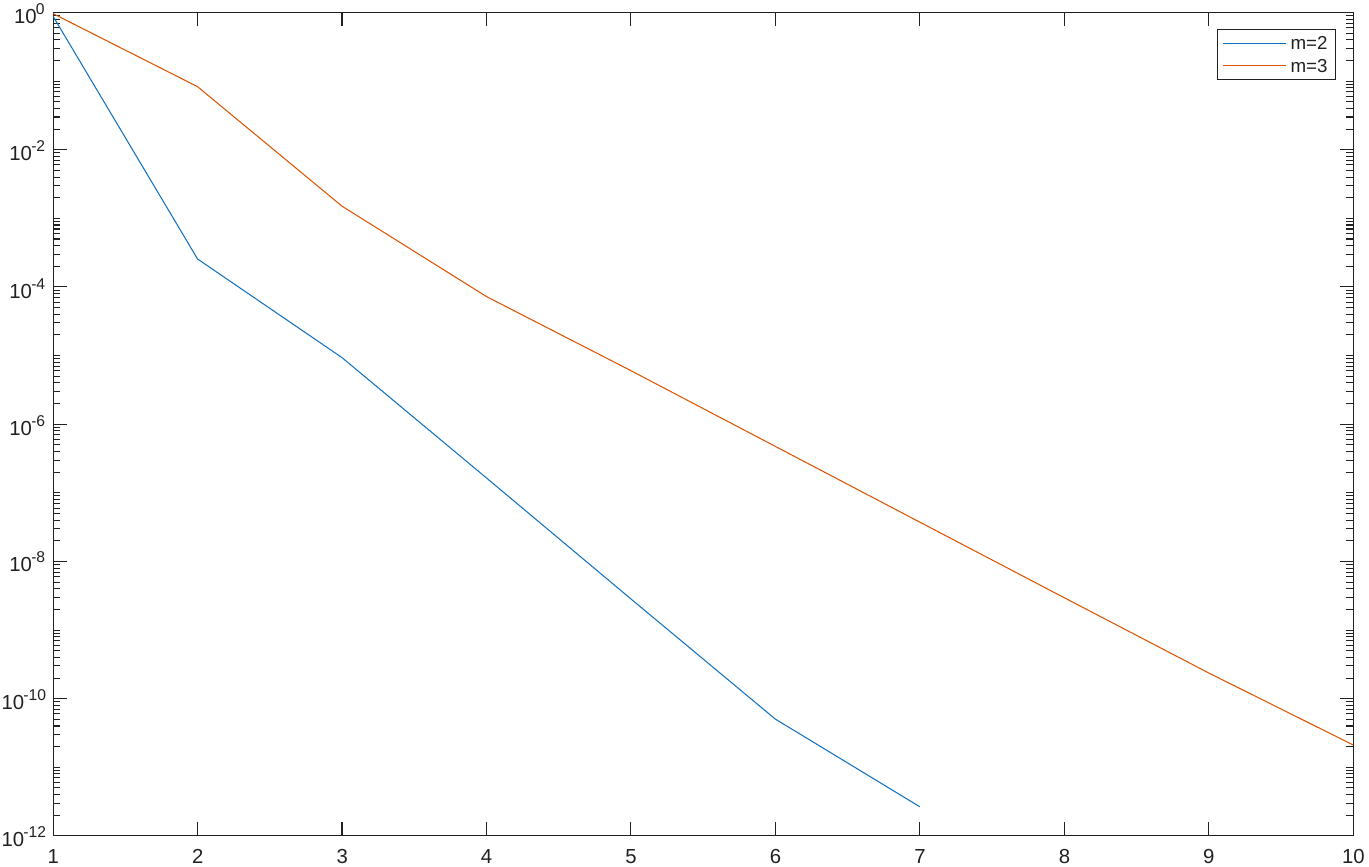}
\end{minipage}%
\begin{minipage}[t]{.35\linewidth}
\vspace{0pt}
\centering
\includegraphics[width=2in]{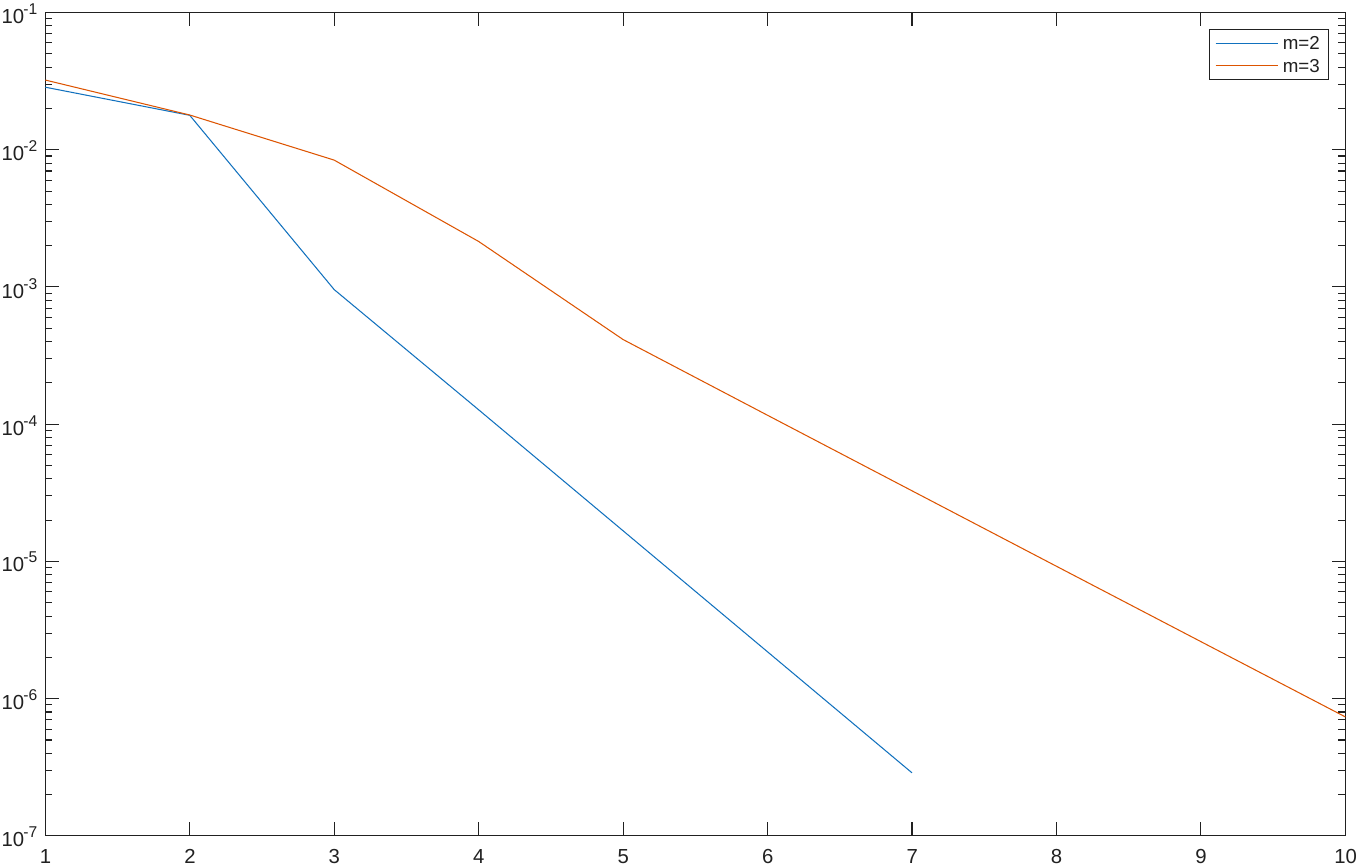}
\end{minipage}
\caption{Convergence history of {\bf Algorithm 1} applied for the approximation of the second and  the third eigenvalue of the  normalized Laplacian  generated from the wine-quality dataset.}\label{fnew1}
\end{figure}
 Notice that fast convergence can be expected  even for moderately large values of $\rho=\alpha_0/\gamma_m$. Similar behaviors  are also  observed with adjacency matrices generated from random undirected graphs.  In Figure \ref{fnew2} we consider  the application  of {\bf Algorithm 1} for the approximation of the second smallest eigenvalue  of the normalized Laplacian  of size $n=2000$ associated with the adjacency matrix $A$ of  the undirected random graph G22 from the SuiteSparse Matrix Collection (formerly the University of Florida Sparse Matrix Collection).  For this problem  our algorithm returns $\alpha_0\simeq 10$ and $\gamma_2=\num{4.4e-2}$.
 \begin{figure}
\begin{minipage}[t]{.55\linewidth}
\vspace{0pt}
\centering
\includegraphics[width=2in]{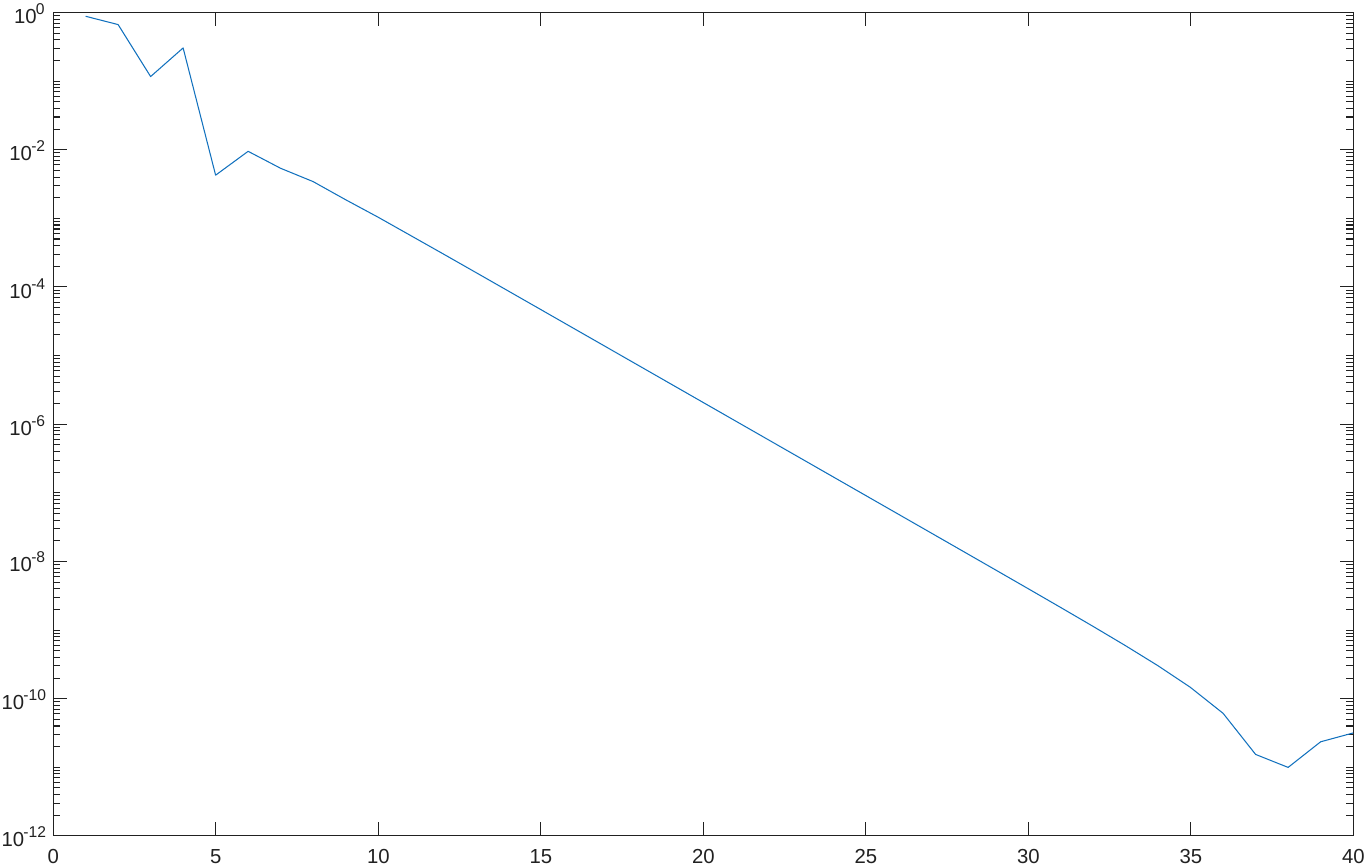}
\end{minipage}%
\begin{minipage}[t]{.35\linewidth}
\vspace{0pt}
\centering
\includegraphics[width=2in]{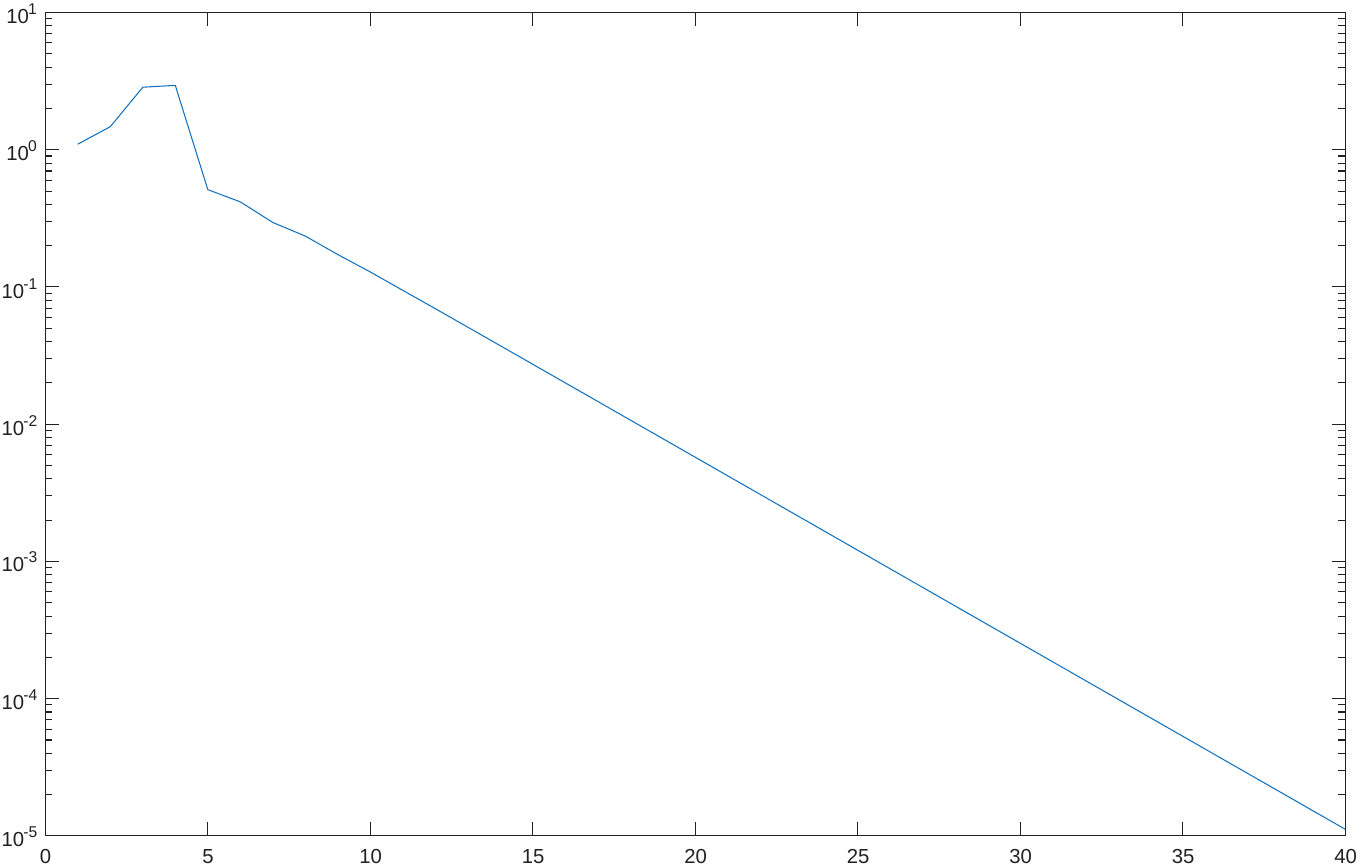}
\end{minipage}
\caption{Convergence history of {\bf Algorithm 1} applied for the approximation of the second eigenvalue of the  normalized Laplacian  generated from  G22. The plots  on the left  and on the right show the convergence of $err_j$ and $\off(L^{((n-1)(j-1))}(2, \colon))$, respectively.
}\label{fnew2}
\end{figure}
In Figure \ref{spy8} we illustrate the sparsity pattern of  the adjacency matrix $A$ of G22 together with the pattern of the  matrix suitably reordered according to  the ordering of the entries of the second eigenvector calculated by our proposed method. 
\begin{figure}
\begin{minipage}[t]{.55\linewidth}
\vspace{0pt}
\centering
\includegraphics[width=2in]{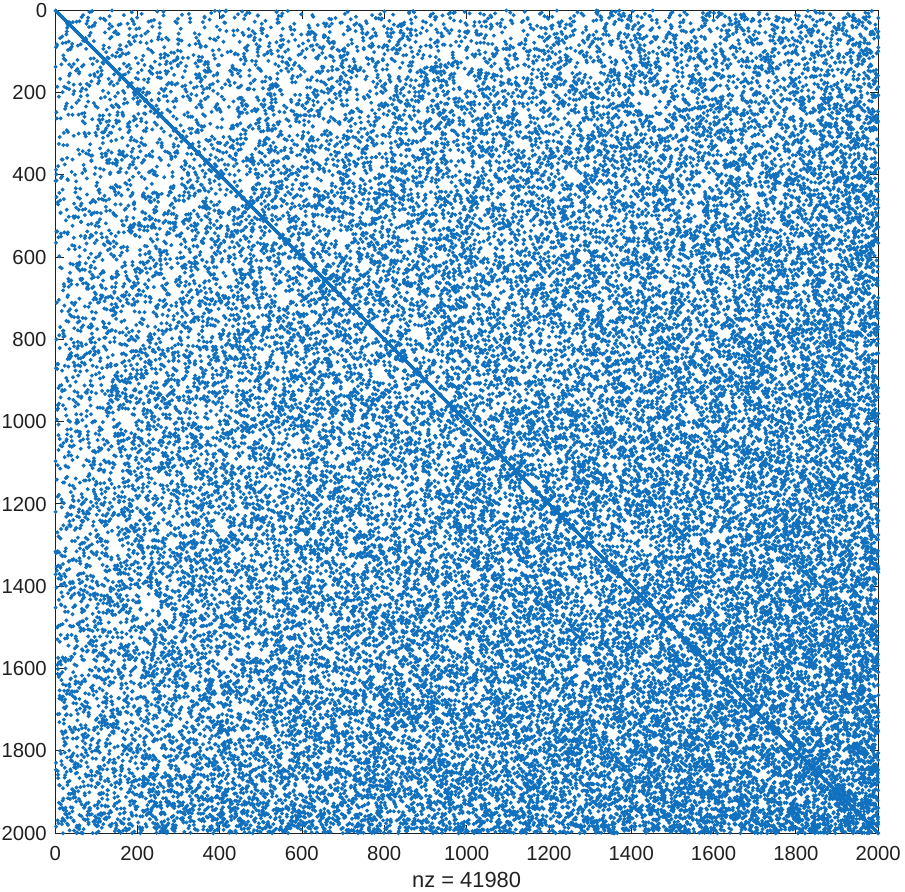}
\end{minipage}%
\begin{minipage}[t]{.35\linewidth}
\vspace{0pt}
\centering
\includegraphics[width=2in]{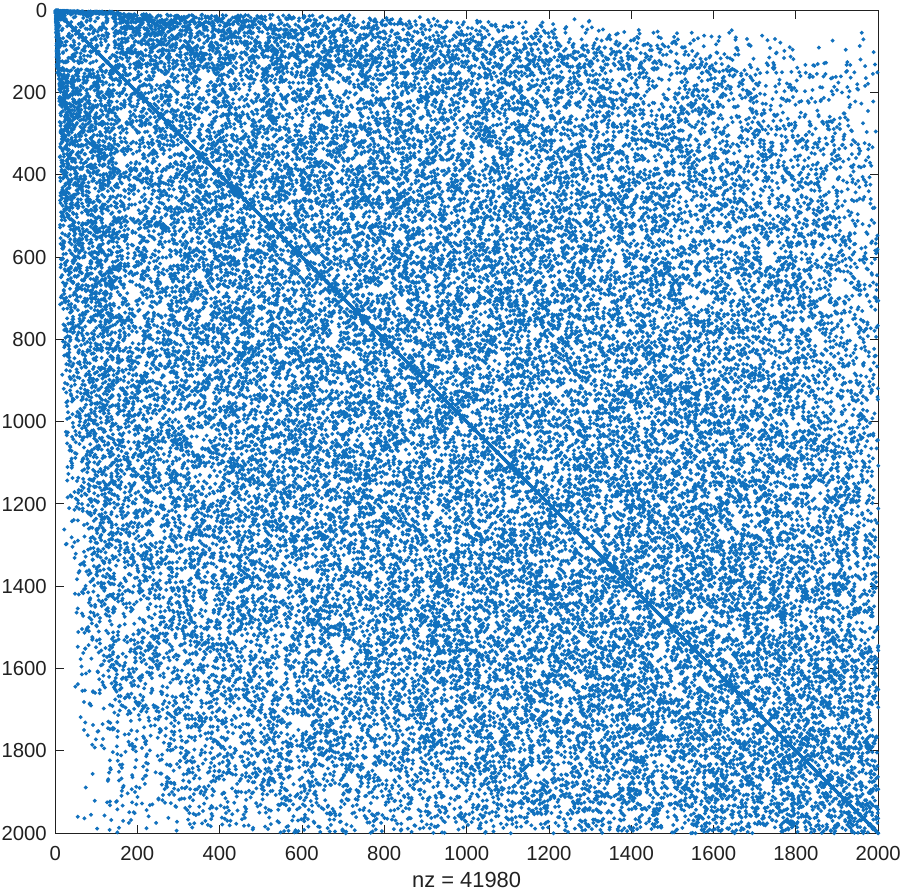}
\end{minipage}
\caption{Sparsity patterns of the adjacency matrix   generated from  G22 and its permuted version according to the ordering of the entries of the second eigenvector of the normalized Laplacian.}\label{spy8}
\end{figure}
For comparison we have tested the convergence of  the MATLAB function {\tt eigs} on these problems. In both  cases the convergence is very fast in two or three iterations but,  to our knowledge {\tt eigs} performs a preliminary factorization at the cost of $O(n^3)$.  

However, the application  of {\bf Algorithm 1} to  symmetric equipotent matrices  deserves further investigation since algorithm performance looks to be  problem dependent.  In particular,  it generally performs poorly when 
it is applied to  large sparse graphs since  slow convergence coupled with "fill-in".      Problem 442 from the SuiteSparse Matrix Collection  is a  sparse Laplacian matrix $A=D-W$ of order $n=10774$.  In Figure \ref{figult} we illustrate the convergence of our proposed method for the approximation of the  smallest eigenvalue of the  normalized Laplacian  generated from $A$ and   from $A+\diag\left[1\colon n\right]$. Observe that this latter matrix is diagonally dominant. The plots show  the errors  $err_j=|L^{(n-1)(j-1)}(1,1) -\lambda_1|$ and $errn_j=\off(L^{((n-1)(j-1))}(1, \colon))$, $j\geq 1$.  The flat behavior of the curves for the matrix $A$ indicates that   {\bf Algorithm 1} performs inefficiently. 
\begin{figure}
\begin{minipage}[t]{.55\linewidth}
\vspace{0pt}
\centering
\includegraphics[width=2in]{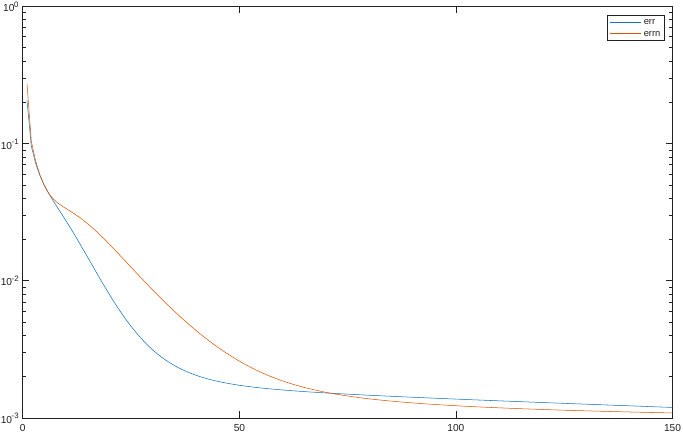}
\end{minipage}%
\begin{minipage}[t]{.35\linewidth}
\vspace{0pt}
\centering
\includegraphics[width=2in]{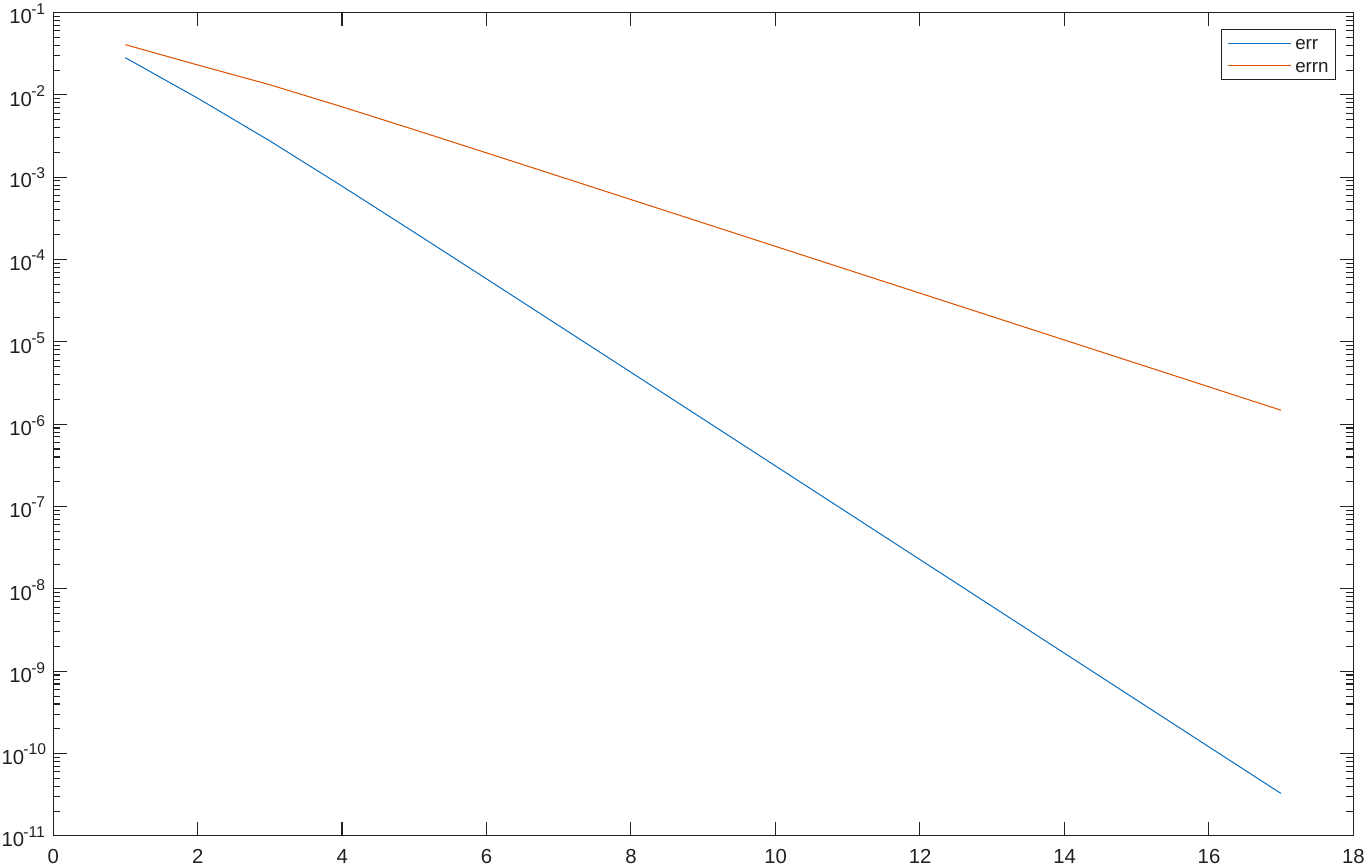}
\end{minipage}
\caption{Convergence history of {\bf Algorithm 1} applied for the approximation of the smallest eigenvalue of matrices generated from Problem 422.  The plot on the left show the errors for the normalized Laplacian. The  plot on the right gives  the errors for the modified matrix.}\label{figult}
\end{figure}

 \item  In the  second set of experiments we consider  diagonal plus rank-one matrices  with size $n$ of the form
 \begin{equation}\label{drk1}
 A=\diag[1+x_1, \ldots, 1+x_n] +\sigma \B u \B u^T
\end{equation}
 where $\B u^T=\left[\begin{array}{ccc} \sin(\sqrt{2}\pi x_1)& \ldots \sin( \sqrt{2}\pi x_n)
\end{array}\right]$,  $x_i=ih$, $h=1/(n+1)$, and $\sigma=1/n$. Observe that $A$ is positive definite and diagonally dominant and, therefore, it is diagonally scaled. 
In Figure \ref{fnew3} we illustrate the convergence history of {\bf Algorithm 1} applied to the matrix $A$ of size $n=1023$ for the approximation of the $m$-th eigenvalue with $m\in \{1, \displaystyle\frac{n+1}{2}, n\}$. 
 \begin{figure}
\begin{minipage}[t]{.55\linewidth}
\vspace{0pt}
\centering
\includegraphics[width=2in]{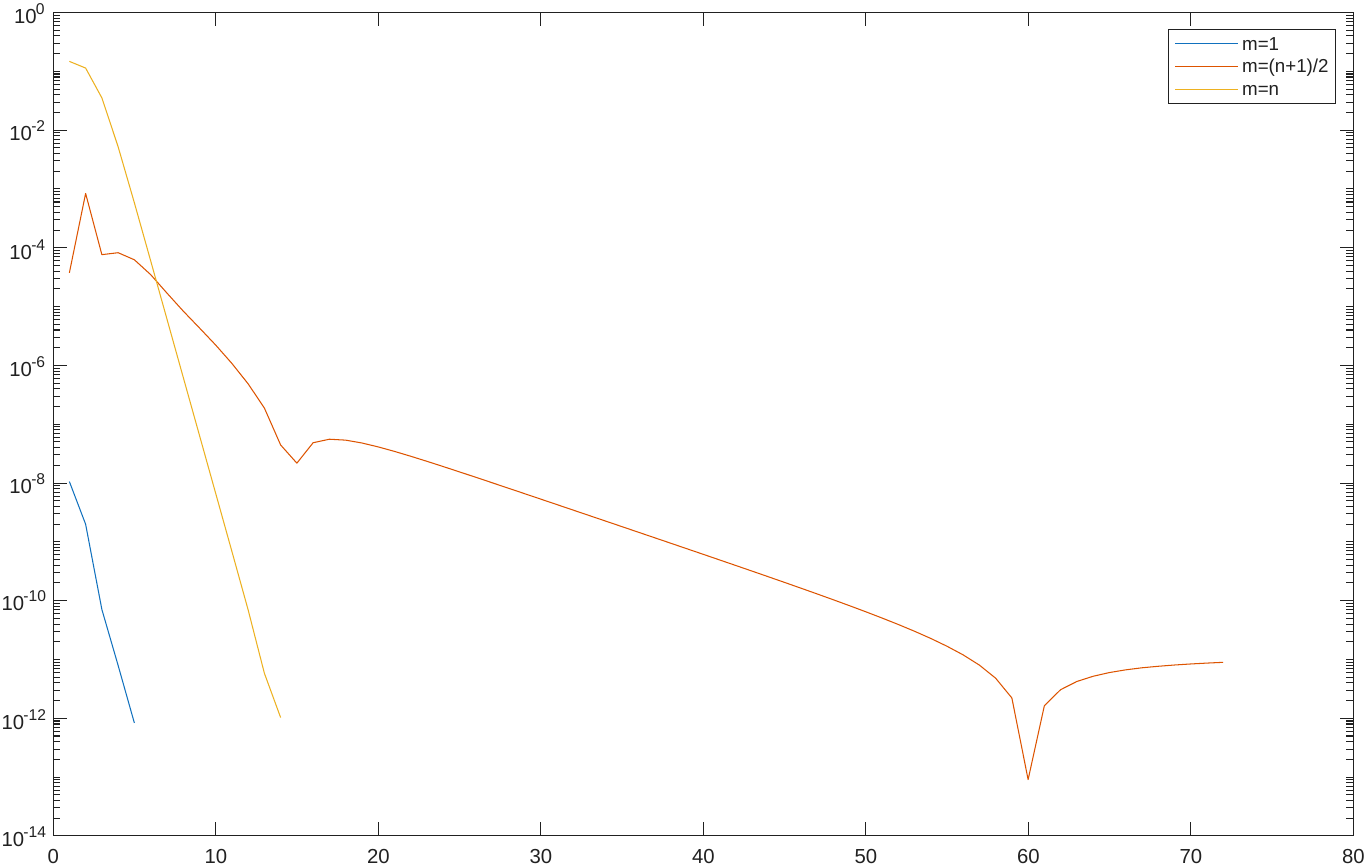}
\end{minipage}%
\begin{minipage}[t]{.35\linewidth}
\vspace{0pt}
\centering
\includegraphics[width=2in]{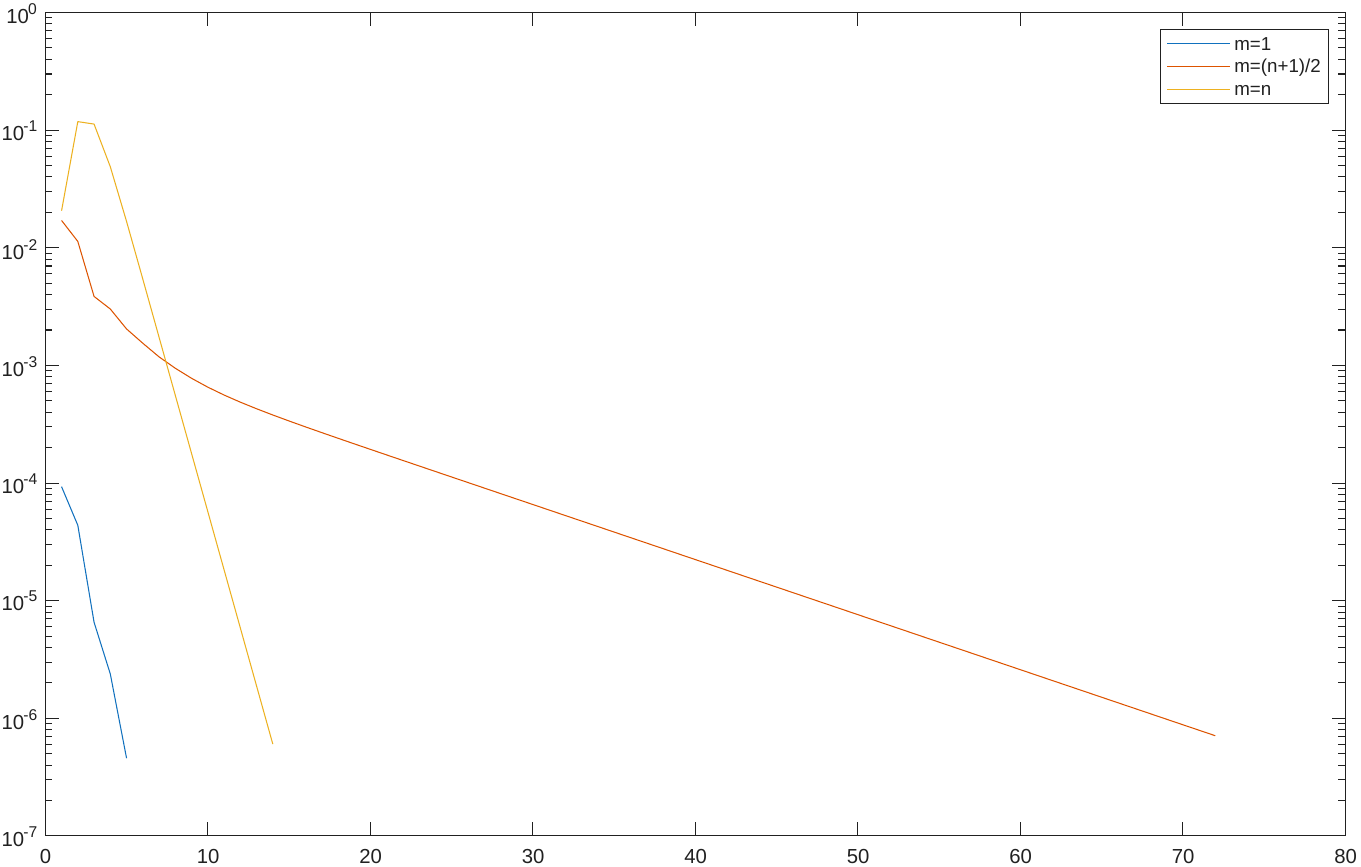}
\end{minipage}
\caption{Convergence history of {\bf Algorithm 1} applied to the matrix \eqref{drk1}.  The plots  on the left  and on the right shows the convergence of $err_j$ and $\off(A^{((n-1)(j-1))}(m, \colon))$, respectively.
}\label{fnew3}
\end{figure}
It is worth noting the different behaviors. For $m=1$ we find $\alpha_0\simeq 0.3$ and $\gamma_1=\num{4.8e-4}$. The  initial error is quite small and the algorithm proceeds very quickly with the refinement.  For $m=\displaystyle\frac{n+1}{2}$,   $\gamma_m\simeq \gamma_1$ but the initial error is larger and, hence, the convergence takes more iterations.  Finally, for $m=n$ the initial error is quite large but 
$\gamma_n=\num{3.6e-2}$ and the convergence is much faster.  It also can be noticed that 
whenever the convergence is fast  the matrices generated in our procedure are rank-structured. There is a  wide literature starting with \cite {EGG} concerning the exploitation of the rank structure for eigenvalue computation of diagonal plus small-rank  matrices.  The adaptation of {\bf Algorithm 1} for working with a generator-based representation of  a diagonal plus small-rank  matrix is an ongoing research. 

\item In the third  set of experiments we have tested the  application of {\bf Algorithm 1} as an eigenvalue/eigenvector tracker in the spirit of the algorithm proposed in \cite{Tilli1997}.  Given a symmetric matrix $A\in \mathbb R^{n\times n}$ we consider the homotopy $A(t)=\diag(A) + t (A-\diag(A))$ with $t\in [0,1]$ by tracking the eigenvalue/eigenvector paths from $t=0$ to $t=1$.  The  step length  at time $t_k$ is  determined adaptively by  using the current available decomposition of $A(t_k)=Q(t_k) \Sigma(t_k){Q(t_k)}^T $.  In particular, we compute $\widehat \gamma_k=\min_{1\leq i\leq n} \gamma_i^{(k)}$ and then we determine  the step length $s_k$ such that $t_{k+1}=t_k +s_k$ to satisfy $ s_k \parallel A-\diag(A)\parallel_F/\widehat \gamma_k$ of order of unity.  Notice that 
\[
{Q(t_k)}^T A(t_{k+1}){Q(t_k)}=\Sigma(t_k) + s_k {Q(t_k)}^T(A-\diag(A)) {Q(t_k)}.  
\]
In Figure  \ref{figotto}
we show  the plots of eigenvalue paths and of  step length  generated by a random matrix of order $n=33$.  The number of steps is $137$ and the average number of iterations  per step and  eigenvalue of our proposed eigenvalue solver  is $4.8$. 
\begin{figure}
\begin{minipage}[t]{.55\linewidth}
\vspace{0pt}
\centering
\includegraphics[width=2in]{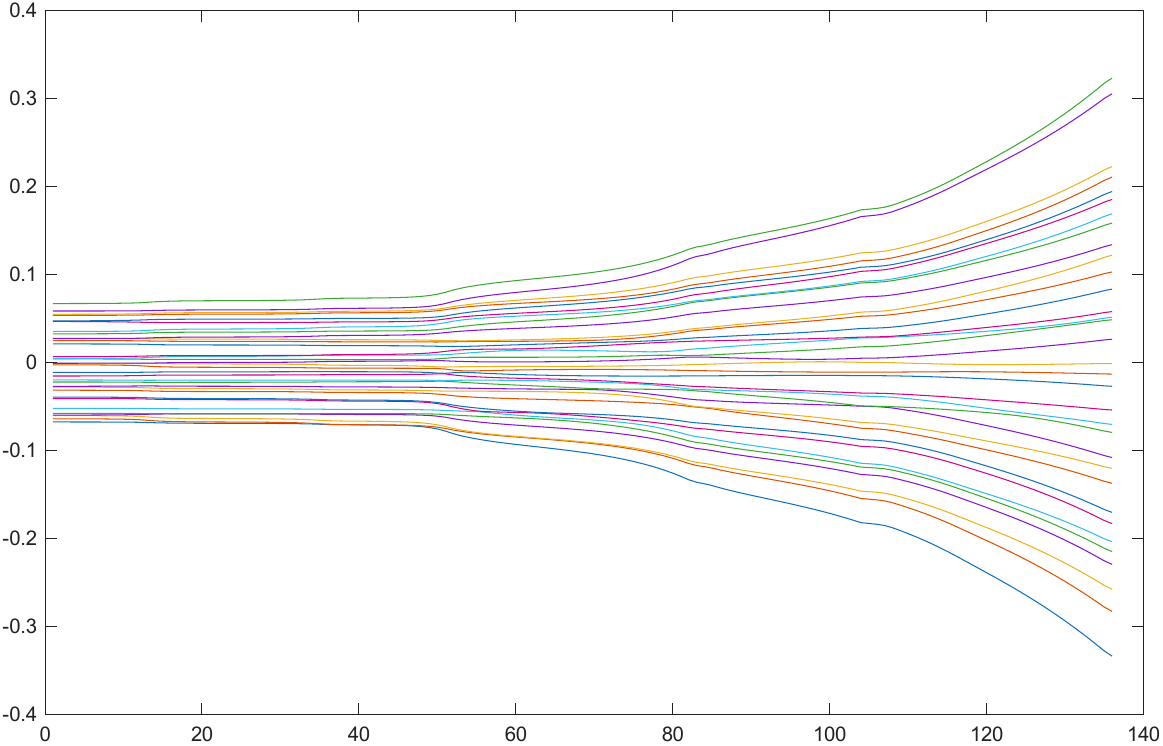}
\end{minipage}%
\begin{minipage}[t]{.35\linewidth}
\vspace{0pt}
\centering
\includegraphics[width=2in]{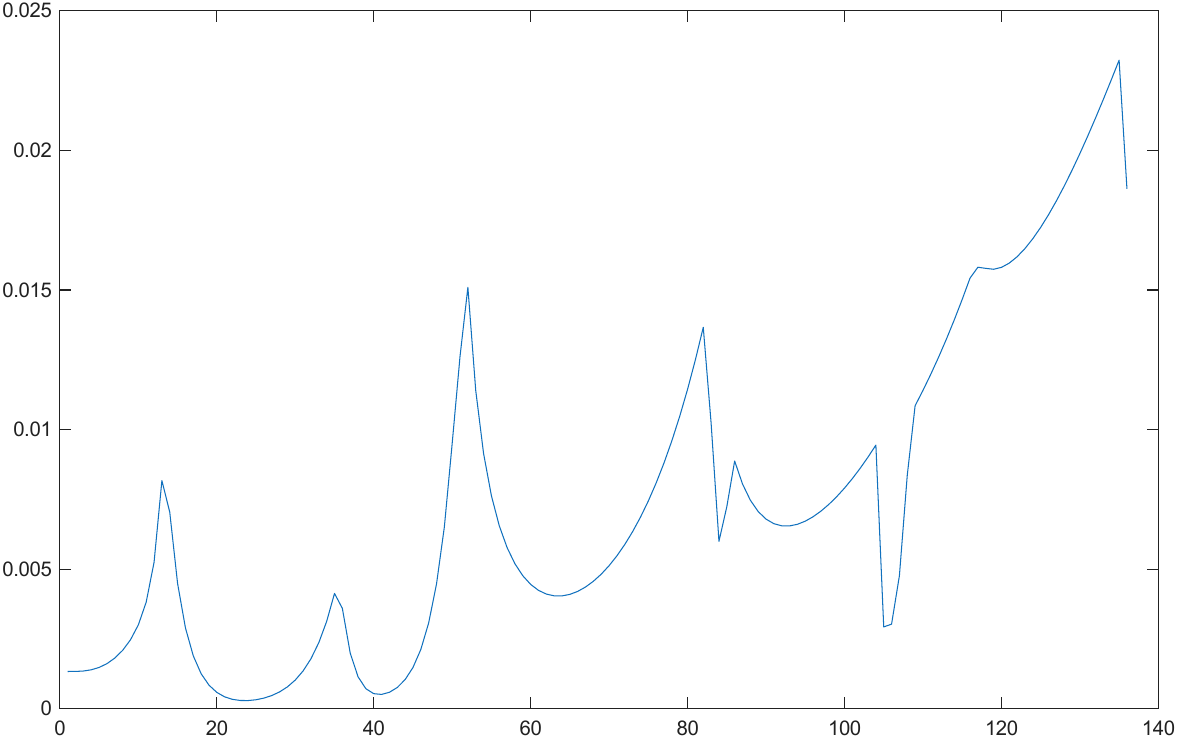}
\end{minipage}%
\caption{Plot of eigenvalue paths and  of the step length function for a random matrix $A$ of order $n=33$. }\label{figotto}
\end{figure}
\end{enumerate}

\section{Conclusion and Future Work}\label{five}
In this paper we have presented an adaptation of the Jacobi method for computing a given specified eigenpair of a symmetric matrix. The algorithm can  be of interest for the computation of well-separated eigenvalues of dense, symmetric matrices with diagonal dominance properties.
A convergence result has been provided under rather restrictive diagonal dominance assumptions. The numerical results   supported by a first order error analysis indicate that the  proposed algorithm exhibits  regular fast linear convergence under weaker conditions. In addition, difficult cases can possibly be detected in the early iterations.
Future work involves further investigation of convergence analysis to explain these numerical results. We are also interested in extending our algorithm to  approximate  a few selected eigenpairs that are well separated from the rest of the spectrum.

 \bibliographystyle{plain} 
\bibliography{secular}
\end{document}